\documentclass[reqno]{amsart}

\usepackage{verbatim} 

\usepackage[utf8]{inputenc}
\usepackage[T1]{fontenc}
\usepackage[english]{babel}
\usepackage{lmodern}

\usepackage{ragged2e}

\usepackage{eurosym}

\usepackage{enumitem}
\usepackage{graphicx}
\usepackage{float}
\usepackage{amsmath}
\allowdisplaybreaks[4]
\usepackage{mathtools}
\usepackage{amssymb}
\usepackage{amsthm}
\usepackage{bbm}

\newcommand{\N}{\mathbb{N}}

\newcommand{\R}{\mathbb{R}}

\newcommand{\pr}{\mathbb{P}}	
\newcommand{\ept}{\mathbb{E}}	

\newtheoremstyle{normal}
  {}
  {}
  {}
  {}
  {\bfseries}
  {}
  {.5em}
  {}

\theoremstyle{plain}
\newtheorem{theorem}{Theorem}

\newtheorem{corollary}[theorem]{Corollary}
\newtheorem{lemma}[theorem]{Lemma}

\newtheorem{assumption}{Assumption}

\theoremstyle{definition}

\newtheorem{remark}[theorem]{Remark}
\allowdisplaybreaks[4]						

\usepackage{color}

\usepackage[a4paper,bindingoffset=0.2in,%
            left=0.5in,right=0.5in,top=1in,bottom=1in,%
            footskip=.25in ]{geometry}

\usepackage{setspace}

\usepackage{fancyhdr}

\usepackage{ifthen}

\allowdisplaybreaks[4]

\fancyheadoffset{\textwidth}
\usepackage[pdftex,pdfpagelabels=true]{hyperref}

\hypersetup{
    colorlinks=true,
    linkcolor=blue,
    filecolor=magenta,
    urlcolor=cyan,
}

\begin{document}

\title[]{Mean-field derivation of a two-dimensional signal-dependent parabolic-elliptic Keller-Segel system in algebraic scaling}

\date{\today}
\thanks{This work is supported from the Deutsche Forschungsgemeinschaft (DFG, German Research Foundation) -- 547277619 and National Nature Science Foundation of China - $11571181$}

\author{Lukas Bol$^\ast$} 
\thanks{$^\ast$Corresponding author.}
\address[Lukas Bol]{School of Business Informatics and Mathematics, Universität Mannheim, 68131, Mannheim, Germany}
\email{lukas.bol@uni-mannheim.de}

\author{Li Chen}
\address[Li Chen]{School of Business Informatics and Mathematics, Universität Mannheim, 68131, Mannheim, Germany}
\email{lichen@uni-mannheim.de}

\begin{abstract} This paper continues our survey about the mean-field derivation of the two-dimensional signal-dependent Keller-Segel system studied in \cite{BOL2026113712}. Therefore, we consider the same system of moderately interacting particles as before. The difference lies in the scaling. Since logarithmic scaling was treated in \cite{BOL2026113712}, we now consider algebraic scaling to obtain propagation of chaos in the weak sense. We prove convergence in probability for the particle trajectories. Moreover, for short times and regularity assumptions on the initial data we show the convergence of the densities in the $L^1$ norm. The novelty of this paper is the treatment of a particle model (with algebraic scaling) where the moderate interaction completely takes place in the diffusive term. This structure with algebraic scaling makes tremendous difference from the propagation of chaos discussion when the interaction appears in the drift terms with singular kernel in \cite{chenwangwang2025}. The argument for convergence in probability proceeds by defining a stopping time based on a power mean of the sample paths. Furthermore, we prove that the convergence in probability with this power-mean is equivalent to the convergence with maximum norm on trajectories.
\end{abstract}

\keywords{Signal dependent Keller-Segel model; Mean-field limit; Propagation of chaos; Moderate interacting; Algebraic scaling.}
\subjclass[2010]{35K45, 35Q70, 60H30, 60J70, 82C22.}
\maketitle

\pagenumbering{arabic}

\section{Introduction}
In this paper we rigorously derive the the two dimensional signal-dependent Keller-Segel system \eqref{singalKellerSegel}, studied in \cite{BOL2026113712}, from a moderately interacting particle system.
\begin{align}
	\label{singalKellerSegel}
	\begin{cases}
		\partial_t u=\Delta( e^{-v}u+ u),    \\
		-\Delta v+v=\chi u, \\
		u(0,x)=u_0(x), \qquad x\in \R^2,\;\; t>0.
	\end{cases}
\end{align}
The function $u$ describes a density of bacteria or cells and $v$ represents a concentration of chemical signals (molecules). 
The Signal-dependent Keller-Segel system describes the phenomena that bacteria or cells orient their movement according to the concentration of a chemical indication, as well as the intensity, gradient strength, or temporal variation of the signal itself. As an improvement of classical Keller Segel system with a fixed chemotactic sensitivity, signal-dependent effect in the modeling includes also nonlinear functions of the signal into the chemotactic flux. 
The expression "signal-dependent" in \eqref{singalKellerSegel} refers to the factor $e^{-v}$ which contains the influence of the chemical signals on the motion of the bacteria. The positive constant $\chi$ determines the strength rate of signals. These models are widely used to study bacterial chemotaxis, cell migration in tissues, and self-organization in microbial populations. The well-posedness of signal-dependent Keller-Segel system (with more general forms) has been intensively studied in the last decade on bounded domains, \cite{tao2014energy,tao2017effects,burger2020delayed,jin2020critical,fujie2020global,fujie2021comparison,fujie2022global,jiang2022boundedness} to name a few.
For more discussions of signal dependent models and the existence of solutions we refer to \cite{BOL2026113712}, where a global well-posedness of \eqref{singalKellerSegel} and its mean-field derivation have been studied on $\R^2$ with logarithmic scaling. 

We stress on a few relevant aspects here. The first equation of \eqref{singalKellerSegel} features mass conservation. So when the initial condition $u_0$ is a probability density, it is ensured that the solution remains a probability density for all times. This is crucial in order to compare $u$ to a law of stochastic particles. Using the Yukawa potential $\tilde \Phi$ (see \cite[Ch.~6.23]{lieb_loss}), the second equation can be restated as $v= \Phi \ast u$, where $\Phi := \chi \tilde \Phi$. The Yukawa potential enjoys the following properties
\begin{align} \label{prop_Phi}
\nabla  \tilde \Phi \in L^p(\R^2), p \in [1,2) \text{ and } \tilde \Phi \in L^q(\R^2), q \in [1,\infty) .
\end{align}
However, $\Phi$ is singular at the origin and so we consider a mollification $\Phi^\varepsilon:= \Phi * j^\varepsilon$ for our particle system, where $j^\varepsilon (x) := \frac{1}{\varepsilon^2 } j(x / \varepsilon)$ is a standard mollifier. Concretely, our particle model looks like this:
\begin{align}
\label{generalized_regularized_particle_model}
\begin{cases}
dX^\varepsilon_{N,i}
= \Big(2\exp\Big(-\displaystyle\frac{1}{N}\displaystyle\sum_{j=1}^N\Phi^\varepsilon(X_{N,i}^\varepsilon-X^\varepsilon_{N,j}) \Big)+2\Big)^{1/2} dB_i(t) ,  \\
X^\varepsilon_{N,i}(0) =\zeta_i,\qquad 1\leq i\leq N.
\end{cases}
\end{align}
The main purpose of this paper is to derive \eqref{singalKellerSegel} from \eqref{generalized_intermediate_particle_model} as a limit for $N \to \infty$ and $\varepsilon \to 0$. In \cite{BOL2026113712} we did that for logarithmic scaling ($\varepsilon = c(\log N)^{-\beta}$). Actually, the cut-off range $\varepsilon$ with logarithmic scaling is so rough that the singularity from $\Phi$ does not make the proof complicated. Now we treat the case of algebraic scaling ($\varepsilon = N^{-\beta}$ for some $\beta >0$). We work in the standard setting for stochastic differential equations. We take some complete filtered probability space $(\Omega, \mathcal{F}, (\mathcal{F}_{t\geq 0}) , \mathbb{P})$ which supports $2$-dimensional $\mathcal{F}_t$-Brownian motions $\{ (B_i(t))_{t\geq 0} \}_{i=1}^\infty$ and random variables $\{\zeta_i\}_{i=1}^\infty$, which are all independent. Moreover, we assume that the distribution of every $\zeta_i$ has density $u_0$ (so $\{\zeta_i\}_{i=1}^\infty$ are i.i.d.). Finally our particles are described by the solutions  $X^\varepsilon_{N,i} : [0,T] \times \Omega \to \R^2$ of \eqref{generalized_regularized_particle_model}. 

For a general overview of mean-field theory we refer to \cite{BOL2026113712,chen2024fluctuationsmeanfieldlimitattractive} and references therein.When the interaction appears in the diffusion term, there are also classical results for regular interactions, such as in \cite{FERNANDEZ199733}. However, for singular interactions, such as in \eqref{generalized_regularized_particle_model}, there are so far only results with the logarithmic scaling. We refer to \cite{chen2022analysis} for cross-diffusion system and to \cite{BOL2026113712} for the signal dependent Keller-Segel equation. For the singular interaction in drift terms, there are a sequence of breakthroughs in the last decade, for example in \cite{Lazarovici2017mean, serfaty2018systems, bresch2023mean, oliverarichardtomasevic2023, chen2025quantitative}. We summarize some recent results on mean-field limit with singular interaction for algebraic scaling. Lazarovici and Pickl derive the Vlasov–Poisson system from a deterministic particle model with random initial data in \cite{Lazarovici2017mean}, where they use a cut-off to regularize coulomb type potentials and show convergence in the sense of probability. The following works consider stochastic particle systems. In \cite{huangluipickl2020} Huang, Liu and Pickl obtain convergence in the sense of probability for a derivation of the Vlasov–Poisson–Fokker–Planck system. Fluctuation for a model with (attractive) Riesz kernels is studied by Chen, Holzinger and Jüngel in \cite{chen2024fluctuationsmeanfieldlimitattractive}, including a convergence in probability result. In \cite{chenwangwang2025} Chen, Y. Wang and Z. Wang consider a mean-field control problem for a Keller-Segel system. Therefore they need to show convergence in probability and propagation of chaos in the strong sense, by using the relative entropy method.

In all the works mentioned for singular interaction in drift terms, the diffusive term is just additive noise. To our knowledge there are no results about models where the singular interaction (with algebraic scaling) takes place in the diffusive term. The main challenge comes from the estimate on the diffusion term, which is from structure point of view very different from the singular interaction appears in the drift term case.
 We want to compare the trajectories of the interacting particles and the particles of the limiting system. In case of additive noise the diffusive terms cancel and only a deterministic integral remains, whereas in the situation of the diffusive case, the stochastic integral still exists. In the smooth interaction case (or singular interaction with logarithmic scaling), for each particle dynamics one can easily apply the Burkholder-Davis-Gundy inequality for martingales. While for singular interaction with algebraic scaling, the maximum (for example in \cite{Lazarovici2017mean, chen2025mean,chenwangwang2025}) among all the particles makes the dynamics not any more a (local-) martingale.

In order to formulate and prove our statements, we need to introduce some additional equations. The first step can be seen as formally taking the limit $N \to \infty$ of \eqref{generalized_regularized_particle_model}, while $\varepsilon$ is fixed, which leads to the intermediate particle model
\begin{align}
\label{generalized_intermediate_particle_model}
\begin{cases}
d\bar{X}^\varepsilon_{i} = \big(2\exp(-\Phi^\varepsilon\ast u^\varepsilon(t, \bar{X}^\varepsilon_{i}))+2\big)^{1/2} dB_i(t) ,\\
\bar{X}^\varepsilon_{i}(0) =\zeta_i,\qquad 1\leq i\leq N .
\end{cases}
\end{align}
Here $u^\varepsilon(t)$ is the probability density of the distribution of $\bar{X}^\varepsilon_{i}(t)$, thus \eqref{generalized_intermediate_particle_model} is of Mckean-Vlasov type.

The advantage is the decoupling of the system \eqref{generalized_regularized_particle_model}. The particles in \eqref{generalized_intermediate_particle_model} do not interact and are consequently independent due to our assumptions on the initial values and Brownian motions. It can be easily obtained by It$\hat{o}$'s formula that the probabilistic density $u^\varepsilon$ solves the following non-local partial differential equations
\begin{align}
\label{kellersegelmedium}
\begin{cases}\partial_t u^\varepsilon =  \Delta ( e^{-{v^\varepsilon}}u^\varepsilon+ u^\varepsilon),     \\
-\Delta {v^\varepsilon}+{v^\varepsilon}=\chi u^\varepsilon\ast j^\varepsilon,  \\
u^\varepsilon(0,x)=u_0(x), \qquad x\in \R^2,\;\; t>0.
\end{cases}
\end{align}
At this point we want to mention since $u^\varepsilon(t )\geq 0$ we have $e^{-v^\varepsilon}=e^{-\Phi^\varepsilon \ast u^\varepsilon}$ is bounded, for each fixed $\varepsilon$.  
By applying the classical parabolic theory we conclude that system \eqref{kellersegelmedium} has a unique global smooth non-negative solution $(u^\varepsilon,{v^\varepsilon})$.
In a second step we formally let $\varepsilon \to 0$ in \eqref{generalized_intermediate_particle_model} and arrive at the limiting particle model:
\begin{align}
\label{generalized_particle_model}
\begin{cases} d\hat{X}_{i} =   \big(2\exp(-\Phi\ast u(t, \hat{X}_{i}))+2\big)^{1/2} dB_i(t),  \\
\hat{X}_{i}(0) =\zeta_i,\qquad 1\leq i\leq N,
\end{cases}
\end{align}
which also is of Mckean-Vlasov type since $u(t)$ is the probabilistic density  of the distribution of $\hat{X}_{i}(t)$.
To obtain Lipschitz continuity for $\Phi*u$, $u$ has to compensate the singularity of $\Phi$. Sufficient regularity of $u$ is guaranteed in \cite[Theorem 2]{BOL2026113712} under the appropriate assumptions on the initial data $u_0$. 

\begin{assumption} \label{ass_ks} \text{}
\begin{itemize}
\item[a)] $u_0$ is a probability density which satisfies $ u_0\log u_0\in L^1(\R^2)$ and $ u_0\in L^1(\R^2, |x|^2\,dx)\cap L^p(\R^2)$ for all $p \in [1,\infty)$.
\item[b)] The Brownian motions $\{ (B_i(t))_{t\geq 0} \}_{i=1}^\infty$ and random variables $\{\zeta_i\}_{i=1}^\infty$ are all independent. The density of the distribution of $\zeta_i$ is $u_0$.
\item[c)] $\chi<4/c_*$ where $c_*$ is the optimal constant in Sobolev inequality: $\|w\|_{L^4(\R^2)}^4\leq c_*\|w\|_{L^2(\R^2)}^2\|\nabla w\|_{L^2(\R^2)}^2$.
\end{itemize}
\end{assumption}

For the existence of the solutions of \eqref{generalized_intermediate_particle_model} and \eqref{generalized_particle_model} we refer to the discussion after theorem 3 in \cite{BOL2026113712}.

Our first theorem compares the particles $X^\varepsilon_{N,i}$ of system \eqref{generalized_regularized_particle_model} to the particles $\bar{X}^\varepsilon_{i}$ of the intermediate model \eqref{generalized_intermediate_particle_model}, which is the most elaborate part of this work. We explicitly estimate the probability that the trajectories differ from each other, allowing algebraic scaling for $\varepsilon$ and $N$. As theorem \ref{ks_conv_of_max_X-bar_X} states, this probability becomes small with arbitrary fast rate.

\begin{theorem} \label{ks_conv_of_max_X-bar_X}
Let $T >0$. Under the assumptions \ref{ass_ks} it holds: \\
$\forall \alpha \in (0, \frac{1}{2} ) : \forall \beta \in (0, \frac{\alpha}{2} ) :\forall \gamma >0 : \exists C>0 : \forall \varepsilon >0, N \in \N, \text{ with } \varepsilon \geq N^{-\beta} : $
\begin{align*}
\sup_{t \in [0,T]} \pr \left( \left\lbrace \max_{1 \leq i \leq N}  \left| X^\varepsilon_{N,i} - \bar{X}^\varepsilon_{i} \right|(t) \geq N^{-\alpha} \right\rbrace \right) \leq  \frac{C}{N^\gamma}.
\end{align*}
\end{theorem}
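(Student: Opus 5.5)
The plan is to run a Lazarovici–Pickl type stopping-time argument. The maximum over particles will be replaced by a power mean, so that Itô calculus and Burkholder–Davis–Gundy become available. Fix a large even integer $m$ and set
\[
S_N(t):=\Big(\frac1N\sum_{i=1}^N |X^\varepsilon_{N,i}-\bar X^\varepsilon_i|^{m}(t)\Big)^{1/m}.
\]
Since $\max_i|\cdot|\le N^{1/m}S_N$, it suffices to control $\pr(S_N(t)\ge N^{-\alpha-1/m})$. Choosing $m$ large lets us pass from an exponent $\alpha'$ slightly larger than $\alpha$ to $\alpha$, with $\beta<\alpha'/2$ still satisfied. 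Introduce the stopping time
\[
\tau:=\inf\{t\ge0:\ S_N(t)\ge N^{-\alpha'}\}\wedge T.
\]
On $[0,\tau]$ every particle satisfies $|X^\varepsilon_{N,i}-\bar X^\varepsilon_i|\le N^{1/m-\alpha'}\ll\varepsilon$, because $\beta<\alpha'/2$. This is what makes the mollified potential usable as if it were Lipschitz at scale $\varepsilon$.

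\textbf{Step 1: decomposing the diffusion difference.} Write $\sigma(z)=(2e^{-z}+2)^{1/2}$; it is Lipschitz on $[0,\infty)$ and all arguments are nonnegative. The difference of the diffusion coefficients for particle $i$ splits as
\[
\sigma\Big(\tfrac1N\sum_j\Phi^\varepsilon(X_i-X_j)\Big)-\sigma\big(\Phi^\varepsilon\ast u^\varepsilon(\bar X_i)\big).
\]
It is bounded by the sum of two terms:
\[
\text{(I)}=\Big|\tfrac1N\sum_j\big[\Phi^\varepsilon(X_i-X_j)-\Phi^\varepsilon(\bar X_i-\bar X_j)\big]\Big|,\qquad
\text{(II)}=\Big|\tfrac1N\sum_j\Phi^\varepsilon(\bar X_i-\bar X_j)-\Phi^\varepsilon\ast u^\varepsilon(\bar X_i)\Big|.
\]
\emph{Term (II)} is a law-of-large-numbers fluctuation for the i.i.d.\ particles $\bar X_j$. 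The plan is to bound its high moments by $C\varepsilon^{-k}N^{-1/2}$, using the $L^q$ bounds \eqref{prop_Phi} for $\Phi$, Young's inequality against $j^\varepsilon$, and the uniform $L^p$ bounds on $u^\varepsilon$ from \cite{BOL2026113712}. A Markov inequality with a high moment then shows that the event $\{\max_i \text{(II)}\ge N^{-\alpha'}\}$ has probability $O(N^{-\gamma})$ for any $\gamma$. This works because $\alpha'<1/2$ and $\varepsilon\ge N^{-\beta}$ costs only a small power of $N$.

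\emph{Term (I)} is handled with a local Lipschitz bound: on $[0,\tau]$ all displacements are below $\varepsilon$, so
\[
\text{(I)}\le \frac1N\sum_j L^\varepsilon(\bar X_i-\bar X_j)\big(|X_i-\bar X_i|+|X_j-\bar X_j|\big),
\qquad L^\varepsilon(x):=\sup_{|y|\le2\varepsilon}|\nabla\Phi^\varepsilon(x+y)|.
\]
Here $L^\varepsilon\lesssim |x|^{-1}\wedge\varepsilon^{-1}$, and $|\nabla\Phi|\lesssim1/|x|$ near $0$. Again by a law of large numbers, $\frac1N\sum_j L^\varepsilon(\bar X_i-\bar X_j)$ is close to $L^\varepsilon\ast u^\varepsilon(\bar X_i)\le C$. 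The $O(1)$ bound comes from $u^\varepsilon\in L^p$ with $p>2$ and $|x|^{-1}\in L^{p'}_{loc}$, and the deviation is controlled with high probability via moments as in (II), up to an $\varepsilon^{-1}$ loss times $N^{-1/2}$. The cross term with $|X_j-\bar X_j|$ is controlled by Hölder's inequality in $j$ against $S_N$.

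\textbf{Step 2: Grönwall under the stopping time.} Apply Itô's formula to $S_N^m(t\wedge\tau)$, outside the bad events of Step 1. The drift part is $\frac{C}{N}\sum_i|X_i-\bar X_i|^{m-2}\,|\Delta\sigma_i|^2$. By Step 1 and Hölder's inequality it is at most $C\,S_N^m+C\,N^{-m\alpha''}$ for some $\alpha''>\alpha'$. The martingale part has zero expectation. Grönwall then gives
\[
\ept S_N^m(t\wedge\tau)\le C N^{-m\alpha''}.
\]
Markov's inequality bounds $\pr(\tau<T)\le\pr(S_N(\tau)\ge N^{-\alpha'})$ by $N^{-m(\alpha''-\alpha')}$ plus the bad-event probabilities. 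Taking $m$ large makes this $\le CN^{-\gamma}$, which proves the theorem for each fixed $t$, uniformly in $t$.

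\textbf{Main obstacle.} The hardest part is Step 1(I). To apply Grönwall, the random Lipschitz factor $\frac1N\sum_j L^\varepsilon(\bar X_i-\bar X_j)$ must be bounded by an $\varepsilon$-independent constant with overwhelming probability, simultaneously for all $i$ and all $t\le T$. Time uniformity is needed inside the Itô integral, not merely pointwise in $t$. I would try a time discretisation on a grid of mesh $N^{-K}$ combined with Hölder continuity of Brownian paths, using a union bound over the grid points and over $i$, with the moment estimates absorbing all polynomial factors. The $\varepsilon^{-1}$ loss in the concentration estimate is exactly where the condition $\beta<\alpha/2$ enters.
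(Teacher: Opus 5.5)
Your proposal is a workable alternative that shares the paper's skeleton: a power mean with a stopping time, the reduction $\max_i|X^\varepsilon_{N,i}-\bar X^\varepsilon_i|\le N^{1/m}S_N$ (exactly Lemma~\ref{equi_mean_max}), the split into (I) and (II), laws of large numbers whose cost is only polynomial because $\varepsilon\ge N^{-\beta}$, and H\"older in $j$ against the power mean for the cross term.

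The genuine difference is how you treat (I). You use that all displacements stay below $\varepsilon$ on $[0,\tau]$, and you work with the local supremum $L^\varepsilon$ of $|\nabla\Phi^\varepsilon|$ (the Lazarovici--Pickl device), so only first-order quantities appear. The paper instead Taylor-expands $\Phi^\varepsilon$ at $\bar X^\varepsilon_i-\bar X^\varepsilon_j$. The linear part is handled with laws of large numbers for $|\nabla\Phi^\varepsilon|$ and $|\nabla\Phi^\varepsilon|^{\frac{2k}{2k-1}}$. The quadratic remainder is bounded, using $S_{N,k}\le 1$, by $\|D^2\Phi^\varepsilon\|_{L^\infty}^{2k}N^{1-2\alpha k}\lesssim N^{1+2k(d\beta-\alpha)}$ with $d>2$. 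Keeping this bounded is precisely where $\beta<\alpha/2$ comes from.

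Your route has no such remainder. It needs only $\beta<\alpha$ (displacements below $\varepsilon$) and $\beta<\frac12$ (concentration, since $\|L^\varepsilon\|_{L^\infty}\lesssim\varepsilon^{-1}$). So your remark that the $\varepsilon^{-1}$ loss is where $\beta<\alpha/2$ enters is inaccurate, and your argument would plausibly cover a wider range of $\beta$. The price is checking that $L^\varepsilon$ is continuous and that $L^\varepsilon\ast u^\varepsilon$ and $(L^\varepsilon)^{m/(m-1)}\ast u^\varepsilon$ are bounded uniformly in $\varepsilon$. This follows from $L^\varepsilon(x)\lesssim|x|^{-1}\wedge\varepsilon^{-1}$ near the origin and the uniform $L^p$ bounds on $u^\varepsilon$ for large $p$. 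Using It\^o's formula for $S_N^m$ instead of BDG per particle plus H\"older in time is only a cosmetic difference.

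The obstacle you single out, time-uniform control of the bad events, does not need to be overcome. The paper takes expectations first and uses Fubini, with the displacements evaluated at $s\wedge\tau$. For each fixed $s$ it splits $\Omega$ along the fixed-time bad event $A^N_0(s)$ of Lemma~\ref{lln}. On that event only crude bounds are used ($\|\nabla\Phi^\varepsilon\|_{L^\infty}$ for the fluctuation and $S_{N,k}\le 1$). This gives an additive error $\|\nabla\Phi^\varepsilon\|_{L^\infty}^{2k}\pr(A^N_0(s))$, which is polynomially small since $\varepsilon\ge N^{-\beta}$. In your It\^o setting this is even simpler: $\sigma$ takes values in $[\sqrt2,2]$, so on a fixed-time bad event the drift contributes at most $C\,\pr(\text{bad event at } s)$, which high moments make $O(N^{-m\alpha''})$.

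This also repairs a flaw in your Step 2 as written. Applying It\^o ``outside the bad events'' and then using that the martingale part has zero expectation is not legitimate when the good event is a path event on $[0,T]$: restricting the expectation to it destroys the zero mean. You would have had to stop additionally at the first time the law-of-large-numbers bounds fail. With fixed-time splitting after Fubini, neither that extra stopping time nor the time-grid and path-regularity argument is needed.
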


In principle, the structure of the proof follows the ideas used for singular interaction in drift terms, such as in \cite[Thm 1.4]{lichenzhang2024}, \cite[Thm 1]{chen2024fluctuationsmeanfieldlimitattractive} or \cite[Prop 4.1]{chenwangwang2025}. In those works the authors consider a stopped process to compare the trajectories. When we look at the expression inside the probability, we can obviously raise the power on both sides without changing the probability. By choosing sufficiently high powers one can obtain arbitrary fast convergence rates. To prevent the error of the trajectories from exploding, they introduce a stopping time. The final conclusion is done by a Gronwall argument. The necessary estimations involve a law of large number result in order to compare the particle interaction and the mean-field interaction.
This careful treatment enables algebraic scaling. 

However, the models in those works mentioned above only feature additive noise as diffusive term. In our case (see \eqref{generalized_regularized_particle_model}) we have to deal with a singular interaction in the diffusive noise. So there is no cancellation and the error $X^\varepsilon_{N,i} - \bar{X}^\varepsilon_{i}$ is a stochastic integral. This requires the use of the Burkholder-Davis-Gundy (BDG) inequality. But this inequality can only be applied to (local-)martingales. Although $X^\varepsilon_{N,i} - \bar{X}^\varepsilon_{i}$ are martingales for $1 \leq i \leq N$, the maximum in $i$ is not any more. For this reason, we can not follow the idea used in the literature mentioned directly. To give a clear explanation of our approach we introduce what is sometimes called power mean or p-mean:
\begin{align*}
M_p(x_1,\dots,x_N):= \bigg( \frac{1}{N} \sum_{i=1}^N \left| x_i \right|^{p} \bigg)^{\frac{1}{p}}, \enspace p \geq 1, \enspace  M_\infty(x_1,\dots,x_N):= \max_{1 \leq i \leq N}  \left| x_i \right|.
\end{align*}
The suggestive notation is no coincidence, since $M_p$ approximates $M_\infty$ as $p \to \infty$. Instead of $M_\infty(X^\varepsilon_{N,i} - \bar{X}^\varepsilon_{i})$ we work with $M_p(X^\varepsilon_{N,i} - \bar{X}^\varepsilon_{i})$ for large $p$. Inside the probability the root can be shifted. Once we moved on to expectation using Markov's inequality, we can bring the sum outside. The only thing left inside the expectation is some power of a martingale, which can be handled by the  Burkholder-Davis-Gundy inequality. In previous works the maximum was raised by some additional power. In our case this power is already included in $M_p(X^\varepsilon_{N,i} - \bar{X}^\varepsilon_{i})$. For this reason, the stopping time also has to depend on $p$. So the parameter $p$ (later in the proof denoted by $k$) becomes part of the statement, since we show the convergence in probability of $M_p(X^\varepsilon_{N,i} - \bar{X}^\varepsilon_{i})$ for $p$ large enough. In the final step, we can switch to $M_\infty(X^\varepsilon_{N,i} - \bar{X}^\varepsilon_{i})$ without any loss in terms of the range of the parameters, since we work with probabilities. \\

To complete the propagation of chaos result, we need to estimate $\bar{X}^\varepsilon_{i}-\hat{X}_{i}$ in order to prove theorem \ref{ks_conv_of_max_X-hat_X}. At this point, we need a slight extension of \cite[Proposition 15]{BOL2026113712}, which is provided by lemma \ref{conv_ept}.

\begin{theorem}[Convergence in probability] \label{ks_conv_of_max_X-hat_X}
Let $T >0$. Under the assumptions \ref{ass_ks} it holds: \\
$\forall \eta \in (0,1) : \forall \beta \in (0, \frac{1}{4} ) :\forall \gamma >0 : \exists C>0 : \forall N \in \N, \text{ with } N \geq 2 : $
\begin{align*}
\sup_{t \in [0,T]} \pr \bigg( \Big\{ \max_{1 \leq i \leq N}  \left| X^\varepsilon_{N,i} - \hat{X}_{i} \right|(t) \geq \varepsilon^\eta\Big\} \bigg) \leq  C \varepsilon^\gamma ,
\end{align*}
where $\varepsilon = N^{-\beta}$.
\end{theorem}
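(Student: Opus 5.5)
The plan is to split the error by the triangle inequality,
\begin{align*}
\max_{1\leq i\leq N}|X^\varepsilon_{N,i}-\hat X_i|(t) \leq \max_{1\leq i\leq N}|X^\varepsilon_{N,i}-\bar X^\varepsilon_i|(t) + \max_{1\leq i\leq N}|\bar X^\varepsilon_i-\hat X_i|(t),
\end{align*}
so that the event in the statement is contained in the union of the events where one of the two terms is at least $\varepsilon^\eta/2$. Each probability is then bounded separately, and the two bounds are added.

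\textbf{First term.} With $\varepsilon=N^{-\beta}$, we have $\varepsilon^\eta/2 = \tfrac12 N^{-\beta\eta}$. Since $\beta<\tfrac14$, we can choose $\alpha\in(2\beta,\tfrac12)$, which gives $\beta<\alpha/2$. Because $\alpha>2\beta>\beta\eta$, we get $N^{-\alpha}\leq \tfrac12 N^{-\beta\eta}$ for all $N\geq N_0$. For these $N$ the event for the first term is contained in $\{\max_i|X^\varepsilon_{N,i}-\bar X^\varepsilon_i|(t)\geq N^{-\alpha}\}$. Theorem \ref{ks_conv_of_max_X-bar_X}, applied with $\gamma' = \gamma/\beta$, bounds its probability uniformly in $t$ by $C N^{-\gamma/\beta} = C\varepsilon^{\gamma}$. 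The finitely many $N<N_0$ are absorbed into the constant, since probabilities are at most $1$ and $\varepsilon\geq N_0^{-\beta}$ there.

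\textbf{Second term.} Here there is no interaction, and the $\bar X^\varepsilon_i-\hat X_i$ are i.i.d. A union bound and Markov's inequality with a large power $k$ give
\begin{align*}
\pr\Big(\max_i|\bar X^\varepsilon_i-\hat X_i|(t)\geq \tfrac12\varepsilon^\eta\Big)\leq N\, 2^k\varepsilon^{-k\eta}\,\ept\big[|\bar X^\varepsilon_1-\hat X_1|^k(t)\big].
\end{align*}
The needed ingredient is Lemma \ref{conv_ept}, the extension of \cite[Proposition 15]{BOL2026113712}. I expect it to give, for every $k$,
\begin{align*}
\sup_{t\in[0,T]}\ept\Big[\sup_{s\leq t}|\bar X^\varepsilon_1-\hat X_1|^k(s)\Big]\leq C_k\varepsilon^{k}
\end{align*}
(or with some rate $\varepsilon^{k\theta}$, $\theta$ close to $1$). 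This rests on BDG and Gronwall, using the Lipschitz bound on $\Phi*u$ and the estimate $\|\Phi^\varepsilon*u^\varepsilon-\Phi*u\|_{W^{1,\infty}}\lesssim\varepsilon$ coming from the regularity of \cite[Theorem 2]{BOL2026113712}. The resulting bound is $C_k 2^k N\varepsilon^{k(1-\eta)} = C_k 2^k\varepsilon^{k(1-\eta)-1/\beta}$, using $N=\varepsilon^{-1/\beta}$. Since $\eta<1$, choosing $k$ with $k(1-\eta)\geq \gamma+1/\beta$ gives $C\varepsilon^\gamma$.

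\textbf{Main obstacle.} The main obstacle is the second term. Its rate must be algebraic in $\varepsilon$ with exponent strictly larger than $\eta$, and it must hold for arbitrarily high moments, because the union bound costs a factor $N=\varepsilon^{-1/\beta}$. This is exactly why the paper needs the extension Lemma \ref{conv_ept} rather than \cite[Proposition 15]{BOL2026113712} as stated. If the lemma only yields a rate $\varepsilon^{\theta}$ with $\theta<1$, the same argument still works for $\eta<\theta$. For $\eta$ up to $1$ one has to verify that $\theta$ can be taken arbitrarily close to $1$, using $u\in L^p$ for all $p$.
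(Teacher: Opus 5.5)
Your proposal is correct and follows essentially the paper's proof: triangle inequality, Theorem \ref{ks_conv_of_max_X-bar_X} with some $\alpha\in(2\beta,\frac12)$ for the first term, and a union bound plus Markov with a high moment from Lemma \ref{conv_ept} for the second. That lemma does give the full rate $C\varepsilon^{2k}$ for every $k\geq 1$, so your fallback with $\theta<1$ is not needed, and the paper splits the threshold as $\varepsilon^{2\eta}+\varepsilon^\eta(1-\varepsilon^\eta)$ rather than $\frac12\varepsilon^\eta+\frac12\varepsilon^\eta$ so that no $N_0$ is needed. One harmless bookkeeping slip: $N^{-\gamma/\beta}$ is not equal to $\varepsilon^\gamma=N^{-\beta\gamma}$, only bounded by it, which still suffices; the natural choice is $\gamma'=\beta\gamma$, which gives equality.
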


Theorem \ref{ks_conv_of_max_X-hat_X} implies the weak propagation of chaos.

\begin{corollary}[Propagation of chaos in the weak sense]
 Let $k \in \N$, $T >0$ and $\beta \in (0, \frac{1}{4} )$. We set $\varepsilon_N:=N^{-\beta }$. Under the assumptions \ref{ass_ks} it holds for the joint distribution for every $t \in [0,T]$
$$ \pr^{(X^{\varepsilon_N}_{N,1}(t),\dots ,X^{\varepsilon_N}_{N,k}(t))} \rightharpoonup u(t)^{\otimes k} \lambda^{2k}, \text{ as } N \to \infty ,$$
where $\lambda^{2k}$ denotes the Lebesgue measure on $\R^{2k}$.

\end{corollary}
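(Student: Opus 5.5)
The plan is to deduce the corollary from Theorem \ref{ks_conv_of_max_X-hat_X} by a standard coupling argument. It suffices to show that for every bounded Lipschitz test function $\varphi:\R^{2k}\to\R$
\begin{align*}
\ept\big[\varphi(X^{\varepsilon_N}_{N,1}(t),\dots,X^{\varepsilon_N}_{N,k}(t))\big] \to \int_{\R^{2k}} \varphi \, u(t)^{\otimes k}\, d\lambda^{2k}, \quad N\to\infty.
\end{align*}
Weak convergence of probability measures is characterized by bounded Lipschitz functions (Portmanteau theorem), so this is enough.

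The first step is to identify the right-hand side with the law of the limiting particles. The processes $\hat X_1,\dots,\hat X_k$ solving \eqref{generalized_particle_model} are driven by the independent Brownian motions $B_1,\dots,B_k$ and the independent initial data $\zeta_1,\dots,\zeta_k$ (Assumption \ref{ass_ks} b)). The coefficient $u$ is a fixed deterministic function, and pathwise uniqueness holds for this equation (see the existence discussion referred to in \cite{BOL2026113712}). Hence each $\hat X_i$ is a measurable functional of $(\zeta_i,B_i)$ alone, so the $\hat X_i(t)$ are independent. Each has law $u(t)\lambda^2$ by the McKean--Vlasov consistency. Therefore $\pr^{(\hat X_1(t),\dots,\hat X_k(t))}=u(t)^{\otimes k}\lambda^{2k}$, and this law does not depend on $N$.

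The second step compares the two systems. Let $L$ be the Lipschitz constant of $\varphi$ and $A_N:=\{\max_{1\le i\le N}|X^{\varepsilon_N}_{N,i}-\hat X_i|(t)\ge \varepsilon_N^\eta\}$ for a fixed $\eta\in(0,1)$. For $N\ge k$ and $N\ge 2$, restricting to the first $k$ indices and splitting on $A_N$ gives
\begin{align*}
\big|\ept\varphi(X^{\varepsilon_N}_{N,1}(t),\dots,X^{\varepsilon_N}_{N,k}(t))-\ept\varphi(\hat X_1(t),\dots,\hat X_k(t))\big|
\le L\sqrt{k}\,\varepsilon_N^\eta + 2\|\varphi\|_\infty \pr(A_N).
\end{align*}
By Theorem \ref{ks_conv_of_max_X-hat_X}, with any $\gamma>0$, we have $\pr(A_N)\le C\varepsilon_N^\gamma$. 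Since $\varepsilon_N=N^{-\beta}\to0$, both terms vanish as $N\to\infty$, which proves the claim.

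There is no real obstacle here. The only point needing care is the first step, namely the independence of the $\hat X_i$ and the fact that their common law is exactly $u(t)$. Both rest on pathwise uniqueness of \eqref{generalized_particle_model} with the Lipschitz coefficient guaranteed by the regularity of $u$ from \cite[Theorem 2]{BOL2026113712}. The corollary itself needs only a single fixed $t$, although the bound in Theorem \ref{ks_conv_of_max_X-hat_X} is uniform in $t\in[0,T]$.
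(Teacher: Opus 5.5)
Your proposal is correct and is the argument the paper has in mind. The paper gives no separate proof and only remarks that Theorem \ref{ks_conv_of_max_X-hat_X} implies the corollary; your coupling argument supplies the standard details it leaves implicit: testing against bounded Lipschitz functions, splitting on the exceptional event controlled by Theorem \ref{ks_conv_of_max_X-hat_X}, and using that $\hat X_1(t),\dots,\hat X_k(t)$ are i.i.d.\ with density $u(t)$.
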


In \cite{BOL2026113712} we already proved propagation of chaos in the strong sense for logarithmic scaling using relative entropy and convergence in expectation. Now we follow the idea of \cite{chenwangwang2025}
to use convergence in probability (theorem \ref{ks_conv_of_max_X-bar_X}) in order to obtain the strong $L^1$ propagation of chaos in algebraic scaling.

\begin{theorem}[Propagation of chaos in the strong sense] \label{poc_strong}
\label{relative} \text{} \\
Let $T >0$ and the assumptions \ref{ass_ks} hold. If we further assume that there exists some $C>0$ such that \\
 $\|\nabla\log u^\varepsilon\|_{L^\infty(0,T;L^\infty(\mathbb{R}^2))}
 \leq C$ for all $\varepsilon >0$ it holds \\  
 $\forall l \in \N : \forall \alpha \in (0,\frac{1}{2}) : \forall \beta \in (0,\frac{\alpha}{2}): \forall b \in (0,2\frac{\alpha}{\beta}-4 ) : \exists C>0 : \forall N \geq l, \varepsilon \in (0,1] \text{ with } \varepsilon \geq N^{-\beta} : $
\begin{align*}
\|u^\varepsilon_{N,l}-u^{\otimes l}\|^2_{L^\infty(0,T;L^1(\mathbb{R}^{2l}))}\leq  C \max\big\{ \varepsilon^{b} , \varepsilon\big\}.
\end{align*}
Here $u^\varepsilon_{N,l}(t)$ denotes the $l$-th marginal density of the joint density $u^\varepsilon_N(t)$ of the distribution of $(X^\varepsilon_{N,i}(t))_{1\leq i\leq N}$. \\
\end{theorem}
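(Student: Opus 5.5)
The plan is to split the error as $\|u^\varepsilon_{N,l}-u^{\otimes l}\|_{L^1}\le \|u^\varepsilon_{N,l}-(u^\varepsilon)^{\otimes l}\|_{L^1}+\|(u^\varepsilon)^{\otimes l}-u^{\otimes l}\|_{L^1}$ and to treat the two pieces separately. The second piece is deterministic. By the triangle inequality over the tensor factors it is bounded by $l\|u^\varepsilon-u\|_{L^\infty(0,T;L^1)}$. We estimate this by an $L^1$ (or $L^2$ energy) stability estimate for \eqref{kellersegelmedium} against \eqref{singalKellerSegel}, using the regularity of $u$ from \cite[Theorem 2]{BOL2026113712}. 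The estimate should give $\|u^\varepsilon-u\|^2\lesssim\varepsilon$ (compare \cite[Proposition 15]{BOL2026113712} and lemma \ref{conv_ept}), since $\|\Phi^\varepsilon-\Phi\|$ in the relevant norms is $O(\varepsilon^{1/2})$ or better. This produces the $\varepsilon$ in the maximum.

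The first piece is handled by relative entropy, following \cite{chenwangwang2025}. By Pinsker and the subadditivity of relative entropy for marginals of exchangeable laws,
\[
\|u^\varepsilon_{N,l}-(u^\varepsilon)^{\otimes l}\|_{L^1}^2\le 2H(u^\varepsilon_{N,l}\,|\,(u^\varepsilon)^{\otimes l})\le \frac{2l}{N}H(u^\varepsilon_N\,|\,(u^\varepsilon)^{\otimes N}).
\]
Next we compute $\frac{d}{dt}H(u^\varepsilon_N|(u^\varepsilon)^{\otimes N})$ from the Liouville equation $\partial_t u^\varepsilon_N=\sum_i\Delta_{x_i}(a_i u^\varepsilon_N)$ with $a_i=e^{-\frac1N\sum_j\Phi^\varepsilon(x_i-x_j)}+1$, together with the equation for $(u^\varepsilon)^{\otimes N}$ with $\bar a_i=e^{-\Phi^\varepsilon*u^\varepsilon(x_i)}+1$. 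Writing $\Delta(a u)=\nabla\cdot(a\nabla u+u\nabla a)$ and integrating by parts, we get a dissipation $-\sum_i\int a_i u_N|\nabla_i\log(u_N/\bar u_N)|^2$, with $a_i\ge1$. The remaining error terms involve $(a_i-\bar a_i)\nabla_i\log\bar u_N$ and $\nabla_i(a_i-\bar a_i)$. Young's inequality absorbs the gradient of the log-ratio into the dissipation. The hypothesis $\|\nabla\log u^\varepsilon\|_\infty\le C$ then controls the coefficient, and we are left with
\[
\frac{d}{dt}H\lesssim \sum_i\mathbb{E}\big[|a_i-\bar a_i|^2+|\nabla_i a_i-\nabla\bar a_i|^2\big](X^\varepsilon_N(t)).
\]
We use that the exponential is Lipschitz on the bounded range (as $\Phi^\varepsilon\ge0$).

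The main obstacle is bounding this expectation, which involves the interaction $\frac1N\sum_j\Phi^\varepsilon(X_{N,i}-X_{N,j})$ evaluated at the true particles, together with its gradient $\nabla\Phi^\varepsilon$ of size up to $\varepsilon^{-1}$ in sup norm. We split the expectation on the event $A=\{\max_i|X^\varepsilon_{N,i}-\bar X^\varepsilon_i|(t)<N^{-\alpha}\}$ and its complement. On $A^c$ we use the crude bound $\|\nabla\Phi^\varepsilon\|_\infty^2\lesssim\varepsilon^{-2}$ (up to log factors), times $\mathbb{P}(A^c)\le CN^{-\gamma}$ from theorem \ref{ks_conv_of_max_X-bar_X} with $\gamma$ arbitrarily large. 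This is negligible since $\varepsilon\ge N^{-\beta}$.

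On $A$ we replace $X$ by $\bar X$. The cost is $\|\nabla^2\Phi^\varepsilon\|_\infty N^{-\alpha}\lesssim\varepsilon^{-2}N^{-\alpha}$, whose square is $\le\varepsilon^{2\alpha/\beta-4}$. This is where the exponent $b<2\alpha/\beta-4$ arises, with the slack used for logarithms. What remains is a law-of-large-numbers term for the i.i.d. $\bar X_j$: $\mathbb{E}|\frac1N\sum_j\nabla\Phi^\varepsilon(\bar X_i-\bar X_j)-\nabla\Phi^\varepsilon*u^\varepsilon(\bar X_i)|^2\lesssim N^{-1}\|\nabla\Phi^\varepsilon\|_{L^2(u^\varepsilon)}^2\lesssim N^{-1}\varepsilon^{-2}$, up to logs, using $u^\varepsilon\in L^\infty$. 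This is $\le\varepsilon^{1/\beta-2}$, which is dominated by the previous term.

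Summing over $i$ gives $N\cdot\varepsilon^b$ on the right-hand side. Since $H(0)=0$ because the initial data are i.i.d., Gronwall yields $\frac1N H(t)\lesssim\varepsilon^b$. Combining the two pieces finishes the proof. Making the $\varepsilon^{-1}$ bounds on $\nabla\Phi^\varepsilon$ and $\nabla^2\Phi^\varepsilon$ sharp enough, given the logarithmic singularity of the Yukawa kernel, is the delicate point.
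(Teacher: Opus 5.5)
Your proposal is correct and follows essentially the same route as the paper: the relative entropy of $u^\varepsilon_N$ with respect to $(u^\varepsilon)^{\otimes N}$, with the log-ratio gradient absorbed into the dissipation via $\|\nabla\log u^\varepsilon\|_\infty\le C$; a split on the event $\{\max_i|X^\varepsilon_{N,i}-\bar X^\varepsilon_i|(t)\ge N^{-\alpha}\}$, whose probability is controlled by Theorem~\ref{ks_conv_of_max_X-bar_X}; a $D^2\Phi^\varepsilon$-Lipschitz bound giving $\varepsilon^{-4}N^{-2\alpha}\le\varepsilon^{2\alpha/\beta-4}$ on the good event plus a law-of-large-numbers remainder; and finally the $O(\varepsilon)$ error for $\|(u^\varepsilon)^{\otimes l}-u^{\otimes l}\|^2$. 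The differences are cosmetic: you use the sharper bound $\|\nabla\Phi^\varepsilon\|_\infty\lesssim\varepsilon^{-1}$ where the paper is cruder, and for the deterministic piece the paper simply cites the error estimate of \cite[Theorem 2]{BOL2026113712} rather than sketching a stability argument, which you could also do instead of your somewhat vague justification.
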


\begin{remark}
The assumption $\|\nabla\log u^\varepsilon\|_{L^\infty(0,T;L^\infty(\mathbb{R}^2))} \leq C$ can be guaranteed by  \cite[Thm 5]{BOL2026113712} for short times $T$ provided that $u_0 \in L^\infty (\R^2)$ and $\nabla \log u_0 \in W^{1,p}(\mathbb{R}^2))$ for some $p>2$.
\end{remark}

\begin{remark}
Due to $\varepsilon \leq 1$ only the interval $[\frac{2}{5}\alpha,\frac{\alpha}{2})$ is relevant for $\beta$. Since $\beta < \frac{2}{5}\alpha$ implies $2\frac{\alpha}{\beta}-4 >1$, we can choose $b >1$. However, this has no effect on the asymptotics, because $\max\big\{ \varepsilon^{b} , \varepsilon\big\} = \varepsilon $. 
\end{remark}

The arrangement of the paper is the following: The proof of theorem \ref{ks_conv_of_max_X-bar_X} is given in section 2. We first show the convergence in probability for $k$-mean of the trajectory difference for big enough $k$, which is the first main technical novelty of this work. Then we prove in section 3 that this convergence is equivalent to the convergence in probability for the maximum trajectory difference. In section 4, we finish the propagation of chaos result both in weak and strong senses. We use the convergence of trajectory difference to estimate the rest terms in the relative entropy estimates.

\section{Convergence in the power mean}

As has been mentioned in the introduction, we first prove that the $k-$mean of the trajectory difference converges in the sense of probability only for big enough $k$. 
\begin{theorem} \label{conv_of_mean} 
Let $T >0$. Under the assumptions \ref{ass_ks} it holds: \\
 $ \forall \alpha \in (0,\frac{1}{2} ) :  \forall \beta \in (0,\frac{\alpha}{2}) : \forall \gamma >0 : \forall k \in ( \frac{1}{\alpha-2\beta} \vee \frac{2\gamma}{1-2\alpha}, \infty ) : \exists  C>0 : \forall \varepsilon >0, N \in \N  \text{ with } \varepsilon \geq N^{-\beta} :$ 
$$ \sup_{t \in [0,T]} \pr \bigg( \Big\{ \Big( \frac{1}{N} \sum_{i=1}^N \left| X^\varepsilon_{N,i} - \bar{X}^\varepsilon_{i} \right|^{2k}(t)  \Big)^{\frac{1}{2k}} \geq N^{-\alpha} \Big\} \bigg) \leq  \frac{C}{N^\gamma}. $$
\end{theorem}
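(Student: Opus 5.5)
We will work with a stopped process and apply the Burkholder--Davis--Gundy inequality to each martingale $X^\varepsilon_{N,i}-\bar X^\varepsilon_i$ separately. Set
$$S_k(t):=\frac1N\sum_{i=1}^N|X^\varepsilon_{N,i}-\bar X^\varepsilon_i|^{2k}(t),\qquad \tau:=\inf\{t\ge0: S_k(t)^{1/(2k)}\ge N^{-\alpha}\}\wedge T.$$
Because $\{S_k(t)^{1/(2k)}\ge N^{-\alpha}\}\subset\{\tau\le t\}$ up to a null set, and $S_k(\tau)^{1/(2k)}= N^{-\alpha}$ on $\{\tau\le t\}$ by continuity of paths, Markov's inequality gives
$$\pr(\{S_k(t)^{1/(2k)}\ge N^{-\alpha}\})\le N^{2k\alpha}\,\ept[S_k(t\wedge\tau)] = N^{2k\alpha}\frac1N\sum_i \ept|X^\varepsilon_{N,i}-\bar X^\varepsilon_i|^{2k}(t\wedge\tau).$$
So it suffices to show $\ept[S_k(t\wedge\tau)]\le C N^{-2k\alpha-\gamma}$.

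\textbf{Step 1 (reduction to the diffusion coefficients).} Let $\sigma(r)=(2e^{-r}+2)^{1/2}$. Each difference is the stochastic integral $\int_0^{t\wedge\tau}(\sigma(a_i)-\sigma(b_i))\,dB_i$, where
$$a_i=\tfrac1N\sum_j\Phi^\varepsilon(X_{N,i}-X_{N,j}),\qquad b_i=\Phi^\varepsilon\ast u^\varepsilon(\bar X_i).$$
The function $\sigma$ is Lipschitz on $[m,\infty)$ for any lower bound $m$, and both $a_i$ and $b_i$ are bounded below because $\Phi\ge0$. By BDG and Hölder in time,
$$\ept|X_{N,i}-\bar X_i|^{2k}(t\wedge\tau)\le C_k\,\ept\int_0^{t\wedge\tau}|a_i-b_i|^{2k}\,ds.$$
We then split $a_i-b_i=I_1+I_2+I_3$ with
$$I_1=\tfrac1N\sum_j\big[\Phi^\varepsilon(X_{N,i}-X_{N,j})-\Phi^\varepsilon(\bar X_i-\bar X_j)\big],\qquad I_2=\tfrac1N\sum_j\Phi^\varepsilon(\bar X_i-\bar X_j)-\Phi^\varepsilon\ast u^\varepsilon(\bar X_i),\qquad I_3=0.$$
The term $I_3$ is zero because the $b_i$ term is already at $\bar X_i$.

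\textbf{Step 2 (law of large numbers for $I_2$).} The $\bar X_j$ are i.i.d. with density $u^\varepsilon$, which is bounded uniformly in $\varepsilon$ in $L^\infty(0,T;L^p)$ by the regularity of \eqref{kellersegelmedium}. A standard moment estimate for centered i.i.d. sums, with the diagonal term $j=i$ handled separately, gives
$$\ept|I_2|^{2k}\le C\,N^{-k}\|\Phi^\varepsilon\|_\infty^{2k}+CN^{-2k}\|\Phi^\varepsilon\|_\infty^{2k}.$$
Since $\Phi$ is only logarithmically singular, $\|\Phi^\varepsilon\|_\infty\lesssim |\log\varepsilon|$, so this is $\le C N^{-k+\delta}$ for any $\delta>0$. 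Multiplied by $N^{2k\alpha}$, this is $N^{-k(1-2\alpha)+\delta}\le N^{-\gamma}$ exactly when $k>2\gamma/(1-2\alpha)$ (with some room). This explains the second lower bound on $k$.

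\textbf{Step 3 (the interaction term $I_1$ and Gronwall; main obstacle).} We would use $\|\nabla\Phi^\varepsilon\|_\infty\lesssim\varepsilon^{-1}$ and $\|\nabla^2\Phi^\varepsilon\|_\infty\lesssim\varepsilon^{-2}$, and Taylor expand:
$$|I_1|\le \tfrac1N\sum_j|\nabla\Phi^\varepsilon(\bar X_i-\bar X_j)|\,(|\delta_i|+|\delta_j|)+C\varepsilon^{-2}\tfrac1N\sum_j(|\delta_i|+|\delta_j|)^2,$$
with $\delta_i=X_{N,i}-\bar X_i$. The first-order coefficient is handled by a second law of large numbers: $\frac1N\sum_j|\nabla\Phi^\varepsilon(\bar X_i-\bar X_j)|$ is close to $|\nabla\Phi^\varepsilon|\ast u^\varepsilon(\bar X_i)$, which is $\le C$ by \eqref{prop_Phi} and the $L^p$ bounds on $u^\varepsilon$. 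Its fluctuation is $\le N^{-1/2}\varepsilon^{-1}$ outside an event of probability $O(N^{-\gamma})$, which we add to the final bound. The cross sums $\frac1N\sum_j c_{ij}|\delta_j|$ are controlled in $2k$-th power via Hölder by $S_k$. The quadratic term is the delicate one. Before $\tau$ we have $|\delta_j|\le N^{1/(2k)}S_k^{1/(2k)}\le N^{1/(2k)-\alpha}$ for every $j$, so
$$\varepsilon^{-2}|\delta|^2\le \varepsilon^{-2}N^{1/(2k)-\alpha}|\delta|\le N^{2\beta+1/(2k)-\alpha}|\delta|\le C|\delta|$$
precisely when $k>1/(2(\alpha-2\beta))$. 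This is where the condition $k>1/(\alpha-2\beta)$ enters, and the same $N^{1/(2k)}$ loss from passing from the mean to individual particles has to be absorbed. Collecting terms gives
$$\ept S_k(t\wedge\tau)\le C\int_0^t\ept S_k(s\wedge\tau)\,ds+CN^{-2k\alpha-\gamma},$$
and Gronwall closes the argument. The main obstacle is making the per-particle bound and the Hölder step in the quadratic and first-order terms compatible with the averaged quantity $S_k$, while keeping every exponent within the stated ranges.
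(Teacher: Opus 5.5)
Your proposal follows the paper's proof essentially step by step: a stopping time on the $2k$-power mean, BDG applied to each martingale $X^\varepsilon_{N,i}-\bar X^\varepsilon_i$ separately, and the Lipschitz reduction. As in the paper, you then split into an interaction part (Taylor expanded, with a law of large numbers for the first-order coefficient and H\"older for the cross sums) and a pure law-of-large-numbers part, absorb the $\|D^2\Phi^\varepsilon\|_\infty$ term via the per-particle bound $|\delta_j|\le N^{\frac{1}{2k}-\alpha}$ before $\tau$, and close with Gronwall. Your direct moment bound for the law-of-large-numbers term and your sharper kernel bounds ($\|\nabla\Phi^\varepsilon\|_\infty\lesssim\varepsilon^{-1}$, $\|D^2\Phi^\varepsilon\|_\infty\lesssim\varepsilon^{-2}$, $\|\Phi^\varepsilon\|_\infty\lesssim 1+|\log\varepsilon|$) are harmless refinements that only weaken the requirements on $k$.

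Two steps you leave loose are closed in the paper as follows. For the cross sums, the paper uses H\"older with exponents $(2k,\frac{2k}{2k-1})$ together with a second law of large numbers for $|\nabla\Phi^\varepsilon|^{\frac{2k}{2k-1}}$; the convolution of that kernel with $u^\varepsilon$ is uniformly bounded because $k>1$. For the exceptional events, you need probability $O(N^{-\gamma-2k\beta})$ rather than $O(N^{-\gamma})$, to compensate the factor $\|\nabla\Phi^\varepsilon\|_\infty^{2k}$ on them; the law-of-large-numbers lemma supplies this by taking high moments.
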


\begin{remark} In view of lemma \ref{equi_mean_max}, as will be given later, the explicit dependence of $k$ w.r.t. $\alpha, \beta$ and $\gamma$ is pointless. This dependence appears in the Gronwall argument of theorem \ref{conv_of_mean} and is relevant for the stronger estimation of $\ept (S_{N,k}(t ))$ (defined in the proof).
\end{remark}

\begin{proof} Let $\alpha >0$ and $k \geq 1$. Further let $\varepsilon >0$, $N \in \N $ and $t \in [0,T]$. In the whole proof $C$ denotes a generic constant which may vary in each step and which is independent of $\varepsilon $, $N$ or $t$. (So $C$ is allowed to depend on $k$.)\\

We define the hitting time $\tau_{N,k}$, which depends on $\varepsilon$ as well.
$$ 
\tau_{N,k} := \inf \Big\{  s \in [0,T] \quad \Big| \quad\frac{1}{N} \sum_{i=1}^N \left| X^\varepsilon_{N,i} - \bar{X}^\varepsilon_{i} \right|^{2k}(s) \geq N^{-\alpha 2k} \Big\}
$$
Since $X^\varepsilon_{N,i}$ and $\bar{X}^\varepsilon_{i}$ have continuous sample paths and $[N^{-\alpha},\infty )$ is closed, $\tau_{N,k}$ is a stopping time (see \cite[Ch.~1.2]{karatzas2014brownian}).
Using $\tau_{N,k}$ we define the process $S_{N,k}$
$$ S_{N,k}(s ) := N^{2\alpha k} \frac{1}{N} \sum_{i=1}^N \left| X^\varepsilon_{N,i} - \bar{X}^\varepsilon_{i} \right|^{2k}(s \wedge \tau_{N,k}  ).$$
Again by the continuity of the sample paths one can argue that 
\begin{align} \label{S_N_leq_1}
S_{N,k} \leq 1.
\end{align}
Moreover one can obtain that $ \displaystyle\frac{1}{N} \sum_{i=1}^N \left| X^\varepsilon_{N,i} - \bar{X}^\varepsilon_{i} \right|^{2k}(s ) \geq N^{-\alpha 2k}$ implies $ S_{N,k}(s ) =1$ and we conclude
\begin{align*}
&\pr \bigg( \Big\{ \Big( \frac{1}{N} \sum_{i=1}^N \big| X^\varepsilon_{N,i} - \bar{X}^\varepsilon_{i} \big|^{2k}(t)  \Big)^{\frac{1}{2k}} \geq N^{-\alpha} \Big\} \bigg) = \pr \bigg( \Big\{ \frac{1}{N} \sum_{i=1}^N \big| X^\varepsilon_{N,i} - \bar{X}^\varepsilon_{i} \big|^{2k}(t) \geq N^{-\alpha 2k} \Big\} \bigg) \\
\leq & \pr \big( \left\lbrace S_{N,k}(t ) =1 \right\rbrace \big) \leq \ept (S_{N,k}(t )) =  N^{2\alpha k} \frac{1}{N} \sum_{i=1}^N \ept \left( \left| X^\varepsilon_{N,i} - \bar{X}^\varepsilon_{i} \right|^{2k}(t \wedge \tau_{N,k}  ) \right) 
\end{align*}
The next part of the proof consist of a Gronwall argument for $\ept (S_{N,k})$. As abbreviation we introduce
\begin{align*}
\sigma_{N,i}(s):= \bigg(2\exp\Big(-\displaystyle\frac{1}{N}\displaystyle\sum_{j=1}^N\Phi^\varepsilon((X_{N,i}^\varepsilon-X^\varepsilon_{N,j})(s)) \Big)+2\bigg)^{1/2}  - \Big(2\exp(-\Phi^\varepsilon\ast u^\varepsilon(s,\bar{X}^\varepsilon_{i}(s)))+2\Big)^{1/2}.
\end{align*}
By direct estimate the difference of the dynamics, we obtain
\begin{align*}
& N^{2\alpha k} \frac{1}{N} \sum_{i=1}^N \ept \left( \left| X^\varepsilon_{N,i} - \bar{X}^\varepsilon_{i} \right|^{2k}(t \wedge \tau_{N,k}  ) \right) = N^{2\alpha k} \frac{1}{N} \sum_{i=1}^N \ept \bigg( \Big| \int_0^{t \wedge \tau_{N,k}} \sigma_{N,i}(s) dB_i(s) \Big|^{2k} \bigg) \\
\leq &  N^{2\alpha k} \frac{1}{N} \sum_{i=1}^N \ept \bigg( \sup_{s \leq t \wedge \tau_{N,k}} \bigg| \int_0^{s} \sigma_{N,i}(\tilde s) dB_i(\tilde s) \bigg|^{2k} \bigg) \leq C N^{2\alpha k} \frac{2^{k}}{N} \sum_{i=1}^N \ept \bigg(  \bigg| \int_0^{t \wedge \tau_{N,k}} \sigma_{N,i}^{2}(s) ds \bigg|^k \bigg)
\end{align*}
where the BDG inequality \cite[Ch.~3.3.D]{karatzas2014brownian} has been applied.
Therefore, after applying the Hölder's inequality for the time integral, the estimate is reduced into
\begin{align}\label{sigma2k}
\ept (S_{N,k}(t ))\leq C N^{2\alpha k} \frac{1}{N} \sum_{i=1}^N \ept \Big( \int_0^{t \wedge \tau_{N,k}} \sigma_{N,i}^{2k}(s)  ds  \Big).
\end{align}
From this point, the following discussion is the most tricky part in the proof. In contrast to situations as in \cite{Lazarovici2017mean,chen2024fluctuationsmeanfieldlimitattractive,chenwangwang2025}, where the maximum among the particles w.r.t the index $j=1,\cdots,N$ is considered and thus getting rid of the summation in $j$ is easy, we have to keep everywhere the summation in $j$ and have it decoupled from the interaction force $\displaystyle\sum_{j=1}^N\Phi^\varepsilon((X_{N,i}^\varepsilon-X^\varepsilon_{N,j})$. 

Due to the Lipschitz continuity of functions $\sqrt{2\cdot +2}$ and $\exp(-(\cdot ))$, we estimate $\sigma_{N,i}$ in the following:
\begin{align*}
|\sigma_{N,i} (s  )| \leq & C \bigg| \displaystyle\frac{1}{N}\displaystyle\sum_{j=1}^N\Phi^\varepsilon((X_{N,i}^\varepsilon-X^\varepsilon_{N,j})(s ))   - \Phi^\varepsilon\ast u^\varepsilon(s, \bar{X}^\varepsilon_{i}(s)) \bigg|
\end{align*}
(Observe that the above constant is also independent of $i$.) This implies that
\begin{align*}
 \ept (S_{N,k}(t ))\leq &  C N^{2\alpha k} \frac{1}{N} \sum_{i=1}^N \ept \bigg( \int_0^{t \wedge \tau_{N,k}} \bigg| \displaystyle\frac{1}{N}\displaystyle\sum_{j=1}^N\Phi^\varepsilon((X_{N,i}^\varepsilon-X^\varepsilon_{N,j})(s  ))   - \Phi^\varepsilon\ast u^\varepsilon(s, \bar{X}^\varepsilon_{i}(s  )) \bigg|^{2k}  ds  \bigg) \\
\leq &  C N^{2\alpha k} \frac{1}{N} \sum_{i=1}^N \ept \bigg( \int_0^{t \wedge \tau_{N,k}} \bigg| \frac{1}{N}\displaystyle\sum_{j=1}^N\Phi^\varepsilon((X_{N,i}^\varepsilon-X^\varepsilon_{N,j})(s  )) -\frac{1}{N}\displaystyle\sum_{j=1}^N\Phi^\varepsilon((\bar{X}_{i}^\varepsilon - \bar{X}^\varepsilon_{j})(s)) \bigg|^{2k}  ds  \bigg) \\
& +  C N^{2\alpha k} \frac{1}{N} \sum_{i=1}^N \ept \bigg( \int_0^{t \wedge \tau_{N,k}} \bigg| \frac{1}{N}\displaystyle\sum_{j=1}^N\Phi^\varepsilon((\bar{X}_{i}^\varepsilon - \bar{X}^\varepsilon_{j})(s))  - \Phi^\varepsilon\ast u^\varepsilon(s, \bar{X}^\varepsilon_{i}(s)) \bigg|^{2k}  ds  \bigg) \\
=: & I + II.
\end{align*}
For the term $I$, we compare the $\varepsilon$ dependent pair force $\Phi_\varepsilon$ of the particles of \eqref{generalized_regularized_particle_model} and \eqref{generalized_intermediate_particle_model} by using the Taylor's formula.
\begin{align*}
I=&  C N^{2\alpha k} \frac{1}{N} \sum_{i=1}^N \ept \bigg( \int_0^{t \wedge \tau_{N,k}} \bigg| \frac{1}{N}\displaystyle\sum_{j=1}^N \Big( \Phi^\varepsilon((X_{N,i}^\varepsilon-X^\varepsilon_{N,j})(s  )) - \Phi^\varepsilon((\bar{X}_{i}^\varepsilon - \bar{X}^\varepsilon_{j})(s)) \Bigg) \bigg|^{2k} ds \bigg) \\
\leq &  C N^{2\alpha k} \frac{1}{N} \sum_{i=1}^N \ept \bigg( \int_0^{t \wedge \tau_{N,k}} \bigg| \frac{1}{N}\displaystyle\sum_{j=1}^N \Big( \nabla \Phi^\varepsilon\big((\bar{X}_{i}^\varepsilon - \bar{X}^\varepsilon_{j})(s  )\big)\big[(X_{N,i}^\varepsilon-X^\varepsilon_{N,j} - (\bar{X}_{i}^\varepsilon - \bar{X}^\varepsilon_{j}))(s  )\big] \Big) \bigg|^{2k} ds \bigg) \\
& +  C N^{2\alpha k} \frac{1}{N} \sum_{i=1}^N \ept \bigg( \int_0^{t \wedge \tau_{N,k}}\bigg| \frac{1}{N}\displaystyle\sum_{j=1}^N \Big( \Vert D^2 \Phi^\varepsilon \Vert_{L^\infty (\R^2)} \big| (X_{N,i}^\varepsilon-X^\varepsilon_{N,j} - (\bar{X}_{i}^\varepsilon - \bar{X}^\varepsilon_{j}))(s  ) \big|^2 \Big) \bigg|^{2k} ds \bigg) \\
=: & I_1 + I_2
\end{align*}
The term $I_1$ can be further decomposed into two terms,
\begin{align*}
I_1
\leq &  C N^{2\alpha k} \frac{1}{N} \sum_{i=1}^N \ept \bigg( \int_0^{t \wedge \tau_{N,k}} \Big( \frac{1}{N}\displaystyle\sum_{j=1}^N \left| \nabla \Phi^\varepsilon((\bar{X}_{i}^\varepsilon - \bar{X}^\varepsilon_{j})(s  )) \right| \cdot\left| (X_{N,i}^\varepsilon - \bar{X}_{i}^\varepsilon)(s  ) \right| \Big)^{2k} ds \bigg) \\
& +  C N^{2\alpha k} \frac{1}{N} \sum_{i=1}^N \ept \bigg( \int_0^{t \wedge \tau_{N,k}}\Big( \frac{1}{N}\displaystyle\sum_{j=1}^N \left| \nabla \Phi^\varepsilon((\bar{X}_{i}^\varepsilon - \bar{X}^\varepsilon_{j})(s  )) \right| \cdot\left| (X^\varepsilon_{N,j} - \bar{X}^\varepsilon_{j})(s  ) \right| \Big)^{2k} ds \bigg)\\
 =: & I_{1,1} + I_{1,2}.
\end{align*}
The estimate for $ I_{1,1} $ is relatively simple since the summation in $j$ is not involved in the particle difference. We proceed with the same technique as in \cite[Sect.~4]{chen2024fluctuationsmeanfieldlimitattractive},
 where a law of large numbers argument is used
\begin{align*}
I_{1,1} \leq &  C N^{2\alpha k} \frac{1}{N} \sum_{i=1}^N \ept \left( \int_0^{t \wedge \tau_{N,k}}\left( \left| \nabla \Phi^\varepsilon \right| \ast u^\varepsilon(s) (\bar{X}_i^\varepsilon (s  ))\right)^{2k} \left| (X_{N,i}^\varepsilon - \bar{X}_{i}^\varepsilon)(s  ) \right|^{2k} ds \right) \\
& +  C N^{2\alpha k} \frac{1}{N} \sum_{i=1}^N \ept \left( \int_0^{t \wedge \tau_{N,k}} \big| \text{DMC}_i(s  ) \big|^{2k} \left| (X_{N,i}^\varepsilon - \bar{X}_{i}^\varepsilon)(s  ) \right|^{2k} ds \right)  \\
=: & I_{1,1,1} + I_{1,1,2},
\end{align*}
where, for convenience, we use the shortcut $\text{DMC}_i(s  ) := \displaystyle\frac{1}{N}\sum_{j=1}^N \left| \nabla \Phi^\varepsilon((\bar{X}_{i}^\varepsilon - \bar{X}^\varepsilon_{j})(s  )) \right| - \left| \nabla \Phi^\varepsilon \right| \ast u^\varepsilon(s) (\bar{X}_i^\varepsilon (s  ))$ to write the difference of mean and convolution.

We know from \cite{BOL2026113712} that $u^\varepsilon$ features mass conservation. Together with Young's inequality we can see that \\
$\left| \nabla \Phi^\varepsilon \right| \ast u^\varepsilon \in L^\infty((0,T) \times \R^2)$ and due to the mollification $\left| \nabla \Phi^\varepsilon \right| \ast u^\varepsilon(s)$ is also continuous on $\R^2$. For this reason $\big\Vert \left| \nabla \Phi^\varepsilon \right| \ast u^\varepsilon (s) \big\Vert_{L^\infty(\R^2)}$ agrees with the supremum and together with Fubini's theorem we can easily obtain the estimate for $I_{1,1,1}$:
\begin{align*}
I_{1,1,1} \leq & C N^{2\alpha k} \frac{1}{N} \sum_{i=1}^N \ept \left( \int_0^t \big\Vert \left| \nabla \Phi^\varepsilon \right| \ast u^\varepsilon \big\Vert_{L^\infty((0,T) \times \R^2)}^{2k} \left| (X_{N,i}^\varepsilon - \bar{X}_{i}^\varepsilon)(s \wedge \tau_{N,k}  ) \right|^{2k} ds \right) \\
= & C \big\Vert \left| \nabla \Phi^\varepsilon \right| \ast u^\varepsilon \big\Vert_{L^\infty((0,T) \times \R^2)}^{2k} \int_0^{t} \ept \left(  S_{N,k}(s  ) \right) ds.
\end{align*}
For the estimate of $I_{1,1,2}$, we need to use the law of large numbers argument lemma \ref{lln}. First of all, arguing in the same way as we just did, we receive
\begin{align*}
	I_{1,1,2} 
	\leq &   C \int_0^{t} N^{2\alpha k} \frac{1}{N} \sum_{i=1}^N \ept \left(  \big| \text{DMC}_i(s) \big|^{2k} \left| (X_{N,i}^\varepsilon - \bar{X}_{i}^\varepsilon)(s \wedge \tau_{N,k}  ) \right|^{2k} \right) ds =: C \int^t_0I_{1,1,2}' ds.
\end{align*}
Next, we denote $A^N_0(s):=A^N_0\big(|\nabla \Phi^\varepsilon|, u^\varepsilon(s)\big)$ (see lemma \ref{lln}) and split the sample space into two disjoint subsets $\Omega=A^N_{0}(s)\cup {A^N_{0}(s)}^c$ and likewise $I_{1,1,2}'$.
\begin{align*}
I_{1,1,2}' \leq & N^{2\alpha k} \frac{1}{N} \sum_{i=1}^N \ept \left( \mathbbm{1}_{A^N_{0}(s)} \big| \text{DMC}_i(s) \big|^{2k} \left| (X_{N,i}^\varepsilon - \bar{X}_{i}^\varepsilon)(s \wedge \tau_{N,k}  ) \right|^{2k} \right)\\
& + N^{2\alpha k} \frac{1}{N} \sum_{i=1}^N \ept \bigg( \mathbbm{1}_{{A^N_{0}(s)}^c} \Big( \max_{1 \leq i \leq N} \big|\text{DMC}_i(s  )\big| \Big)^{2k} \left| (X_{N,i}^\varepsilon - \bar{X}_{i}^\varepsilon)(s \wedge \tau_{N,k}  ) \right|^{2k} \bigg) 
\end{align*}
Notice that $\big(\bar{X}_{i}^\varepsilon(s  )\big)_{i=1}^N$ are i.i.d. random variables with law $u^\varepsilon(s)$. Moreover, the mass conservation of $u^\varepsilon$ mentioned earlier in the proof leads to $\Vert \text{DMC}_i \Vert_{L^\infty((0,T) \times \R^2)} \leq 2 \Vert \nabla \Phi^\varepsilon \Vert_{L^\infty(\R^2)}$. Let $k_1 \in \N$. Now we can apply Lemma \ref{lln} and use \eqref{S_N_leq_1} to conclude
\begin{align*}
I_{1,1,2}' \leq &  N^{2\alpha k} \frac{1}{N} \sum_{i=1}^N \ept \left( \mathbbm{1}_{A^N_{0}(s)} \Vert \nabla \Phi^\varepsilon \Vert_{L^\infty(\R^2)}^{2k} \left| (X_{N,i}^\varepsilon - \bar{X}_{i}^\varepsilon)(s \wedge \tau_{N,k}) \right|^{2k} \right)\\
& + N^{2\alpha k} \frac{1}{N} \sum_{i=1}^N \ept \Big( \mathbbm{1}_{{A^N_{0}(s)}^c} \left| (X_{N,i}^\varepsilon - \bar{X}_{i}^\varepsilon)(s \wedge \tau_{N,k}) \right|^{2k} \Big) \\
\leq & \Vert \nabla \Phi^\varepsilon \Vert_{L^\infty(\R^2)}^{2k} \ept \left( \mathbbm{1}_{A^N_{0}(s)} S_{N,k}(s) \right)+ \ept \left( S_{N,k}(s) \right) \\
\leq & \Vert \nabla \Phi^\varepsilon \Vert_{L^\infty(\R^2)}^{2k}  \pr \left( A^N_{0}(s) \right) + \ept \left( S_{N,k}(s  ) \right)\\
\leq & \Vert \nabla \Phi^\varepsilon \Vert_{L^\infty(\R^2)}^{2k} C N^{-k_1+1} \Vert \nabla \Phi^\varepsilon \Vert_{L^\infty(\R^2)}^{2{k_1}} + \ept \left( S_{N,k}(s) \right).
\end{align*}
Therefore the estimate for $I_{1,1,2}$ follows
\begin{align*}
I_{1,1,2} \leq &  C \Vert \nabla \Phi^\varepsilon \Vert_{L^\infty(\R^2)}^{2k}  N^{-k_1+1} \Vert \nabla \Phi^\varepsilon \Vert_{L^\infty(\R^2)}^{2{k_1}} +  C \int_0^{t}  \ept \left( S_{N,k}(s  ) \right) ds.
\end{align*}

In the next step, we estimate $I_{1,2}$, which is the most difficult term in the whole argument. The key idea is to split the summation in $j$ by using Hölder's inequality on $\R^N$ for $(2k,\frac{2k}{2k-1})$,
\begin{align*}
& \displaystyle\sum_{j=1}^N \left| \nabla \Phi^\varepsilon((\bar{X}_{i}^\varepsilon - \bar{X}^\varepsilon_{j})(s  )) \right| \left| (X^\varepsilon_{N,j} - \bar{X}^\varepsilon_{j})(s  ) \right| \\
\leq & \bigg( \displaystyle\sum_{j=1}^N \left| \nabla \Phi^\varepsilon((\bar{X}_{i}^\varepsilon - \bar{X}^\varepsilon_{j})(s  )) \right|^{\frac{2k}{2k-1}} \bigg)^{\frac{2k-1}{2k}} \bigg(\displaystyle\sum_{j=1}^N \left| (X^\varepsilon_{N,j} - \bar{X}^\varepsilon_{j})(s  ) \right|^{2k} \bigg)^{\frac{1}{2k}}.
\end{align*}
This implies that the estimate for $I_{1,2}$ can be spitted into
\begin{align*}
 & I_{1,2} \\
 \leq &  C N^{2\alpha k} \frac{1}{N} \sum_{i=1}^N \ept \bigg( \int_0^{t \wedge \tau_{N,k}} \frac{1}{N^{2k}} \Big( \displaystyle\sum_{j=1}^N \left| \nabla \Phi^\varepsilon((\bar{X}_{i}^\varepsilon - \bar{X}^\varepsilon_{j})(s  )) \right|^{\frac{2k}{2k-1}} \Big)^{2k-1} \displaystyle\sum_{j=1}^N \left| (X^\varepsilon_{N,j} - \bar{X}^\varepsilon_{j})(s  ) \right|^{2k} ds \bigg)  \\
 \leq &  C \frac{1}{N} \sum_{i=1}^N \ept \bigg( \int_0^{t} \frac{1}{N^{2k-1}} \Big( \displaystyle\sum_{j=1}^N \left| \nabla \Phi^\varepsilon((\bar{X}_{i}^\varepsilon - \bar{X}^\varepsilon_{j})(s  )) \right|^{\frac{2k}{2k-1}} \Big)^{2k-1} N^{2\alpha k} \frac{1}{N} \displaystyle\sum_{j=1}^N \left| (X^\varepsilon_{N,j} - \bar{X}^\varepsilon_{j})(s \wedge \tau_{N,k}  ) \right|^{2k} ds \bigg) \\
 = &  C \frac{1}{N} \sum_{i=1}^N \ept \bigg( \int_0^{t} \Big( \frac{1}{N} \displaystyle\sum_{j=1}^N \left| \nabla \Phi^\varepsilon((\bar{X}_{i}^\varepsilon - \bar{X}^\varepsilon_{j})(s  )) \right|^{\frac{2k}{2k-1}} \Big)^{2k-1} S_{N,k}(s  ) ds \bigg) \\
 \leq &  C\frac{1}{N} \sum_{i=1}^N \ept \bigg( \int_0^{t} \Big( \left| \nabla \Phi^\varepsilon \right|^{\frac{2k}{2k-1}} \ast u^\varepsilon(s) (\bar{X}_i^\varepsilon (s  )) \Big)^{2k-1} S_{N,k}(s  ) ds \bigg) \\
  & +  C \frac{1}{N} \sum_{i=1}^N \ept \bigg( \int_0^{t} \Big| \frac{1}{N} \displaystyle\sum_{j=1}^N \left| \nabla \Phi^\varepsilon((\bar{X}_{i}^\varepsilon - \bar{X}^\varepsilon_{j})(s  )) \right|^{\frac{2k}{2k-1}} - \left| \nabla \Phi^\varepsilon \right|^{\frac{2k}{2k-1}} \ast u^\varepsilon(s) (\bar{X}_i^\varepsilon (s  )) \Big|^{2k-1} S_{N,k}(s  ) ds \bigg) \\
  =: & I_{1,2,1} + I_{1,2,2}.
\end{align*}
The estimates for $I_{1,2,1}$ and $I_{1,2,2}$ follow basically similar ideas to those for $I_{1,1,1}$ and $I_{1,1,2}$, but need more efforts. For $ I_{1,2,1}$ we get
\begin{align*}
 I_{1,2,1}
\leq  C \left\Vert \left| \nabla \Phi^\varepsilon \right|^{\frac{2k}{2k-1}} \ast u^\varepsilon \right\Vert_{L^\infty((0,T) \times \R^2)}^{2k-1} \int_0^{t} \ept \left(  S_{N,k}(s  ) \right) ds .
\end{align*}

To estimate $I_{1,2,2}$, we introduce a shortcut for the difference of mean and convolution with power
$$ \text{DMC}_i^k(s  ):= \frac{1}{N} \displaystyle\sum_{j=1}^N \left| \nabla \Phi^\varepsilon((\bar{X}_{i}^\varepsilon - \bar{X}^\varepsilon_{j})(s  )) \right|^{\frac{2k}{2k-1}} - \left| \nabla \Phi^\varepsilon \right|^{\frac{2k}{2k-1}} \ast u^\varepsilon(s) (\bar{X}_i^\varepsilon (s  ))$$
and $A^{N,k}_0(s):=A^N_0(|\nabla \Phi^\varepsilon|^{\frac{2k}{2k-1}}, u^\varepsilon(s))$ (see lemma \ref{lln}), then it follows that
\begin{align*}
I_{1,2,2} \leq &  C \int_0^{t} \ept \left( \max_{1 \leq i \leq N} \big| \text{DMC}_i^k(s  ) \big|^{2k-1} S_{N,k}(s  ) \right) ds =: C \int_0^{t}I_{1,2,2}'ds.
\end{align*}
Next, we split again the sample space $\Omega=A^{N,k}_{0}(s)\cup \big(A^{N,k}_{0}(s)\big)^c$. Due to the mass conservation of $u^\varepsilon$ mentioned earlier in the proof, it holds $\Vert \text{DMC}_i^k \Vert_{L^\infty((0,T) \times \R^2)} \leq 2 \Vert \nabla \Phi^\varepsilon \Vert_{L^\infty(\R^2)}^{\frac{2k}{2k-1}}$. Let $k_2 \in \N$. Using the law of large number result lemma \ref{lln} and \eqref{S_N_leq_1} we can show
\begin{align*}
I_{1,2,2}' =& \ept \Big( \mathbbm{1}_{A^{N,k}_{0}(s)} \max_{1 \leq i \leq N} \big| \text{DMC}_i^k(s  ) \big|^{2k-1} S_{N,k}(s  ) \Big) + \ept \Big( \mathbbm{1}_{(A^{N,k}_{0}(s))^c} \max_{1 \leq i \leq N} \big| \text{DMC}_i^k(s  ) \big|^{2k-1} S_{N,k}(s  ) \Big) \\
\leq & \ept \Big( \mathbbm{1}_{A^{N,k}_{0}(s)}  C \Vert \nabla \Phi^\varepsilon \Vert_{L^\infty(\R^2)}^{2k} S_{N,k}(s  ) \Big) + \ept \Big( \mathbbm{1}_{(A^{N,k}_{0}(s))^c}  S_{N,k}(s  ) \Big) \\
\leq  &  C \Vert \nabla \Phi^\varepsilon \Vert_{L^\infty(\R^2)}^{2k} \pr \left( A^{N,k}_{0}(s) \right) + \ept \left(  S_{N,k}(s  ) \right)\\
\leq  &  C \Vert \nabla \Phi^\varepsilon \Vert_{L^\infty(\R^2)}^{2k} N^{-k_2+1} \Vert \nabla \Phi^\varepsilon  \Vert_{L^\infty(\R^2)}^{\frac{2k}{2k-1}2{k_2}} + \ept \left(  S_{N,k}(s  ) \right),
\end{align*}
which implies the estimate for $I_{1,2,2}$:
\begin{align*}
I_{1,2,2} 
\leq &  C \Vert \nabla \Phi^\varepsilon \Vert_{L^\infty(\R^2)}^{2k}  N^{-k_2+1} \Vert \nabla \Phi^\varepsilon  \Vert_{L^\infty(\R^2)}^{\frac{2k}{2k-1}2{k_2}} +  C \int_0^{t} \ept \left(  S_{N,k}(s) \right) ds.
\end{align*}
The higher order term in the Taylor's formula, $I_2$, can be estimated by the cut-off parameter $N^{-\alpha}$, namely
\begin{align*}
I_2\leq &  C N^{2\alpha k} \left\Vert D^2 \Phi^\varepsilon \right\Vert_{L^\infty(\R^2)}^{2k} \frac{1}{N} \sum_{i=1}^N \ept \bigg( \int_0^{t \wedge \tau_{N,k}}\Big| \frac{1}{N}\displaystyle\sum_{j=1}^N \left( \left( \left| X_{N,i}^\varepsilon- \bar{X}_{i}^\varepsilon \right|(s  ) \right)^2 + \left( \left| X^\varepsilon_{N,j}- \bar{X}_{j}^\varepsilon \right|(s  ) \right)^2 \right) \Big|^{2k} ds \bigg) \\
\leq &  C N^{2\alpha k} \left\Vert D^2 \Phi^\varepsilon \right\Vert_{L^\infty(\R^2)}^{2k} \frac{1}{N} \sum_{i=1}^N \ept \bigg( \int_0^{t \wedge \tau_{N,k}} \left( \left| X_{N,i}^\varepsilon- \bar{X}_{i}^\varepsilon \right|(s  ) \right)^{4k} + \frac{1}{N}\displaystyle\sum_{j=1}^N \left( \left| X^\varepsilon_{N,j}- \bar{X}_{j}^\varepsilon \right|(s  ) \right)^{4k} \bigg) ds \\
\leq &  2 C N^{2\alpha k} \left\Vert D^2 \Phi^\varepsilon \right\Vert_{L^\infty(\R^2)}^{2k}  \int_0^{t} \ept \bigg(  \frac{1}{N} \sum_{i=1}^N \left( \left| X_{N,i}^\varepsilon- \bar{X}_{i}^\varepsilon \right|(s \wedge \tau_{N,k}  ) \right)^{4k} \bigg) ds.
\end{align*}
Since $N^{2\alpha k} \frac{1}{N} \sum\limits_{1 \leq i \leq N} \left| X^\varepsilon_{N,i} - \bar{X}^\varepsilon_{i} \right|^{2k}(t \wedge \tau_{N,k}) )=S_{N,k}(t ) \leq 1$ by \eqref{S_N_leq_1}, we conclude
\begin{align*}
& \left| X^\varepsilon_{N,i} - \bar{X}^\varepsilon_{i} \right|^{2k}(t \wedge \tau_{N,k} ) \leq \sum\limits_{1 \leq i \leq N} \left| X^\varepsilon_{N,i} - \bar{X}^\varepsilon_{i} \right|^{2k}(t \wedge \tau_{N,k} ) \leq N^{1-2 \alpha k},
\end{align*}
which implies
\begin{align*}
 &  \frac{1}{N} \sum_{i=1}^N \left( \left| X_{N,i}^\varepsilon- \bar{X}_{i}^\varepsilon \right|(s \wedge \tau_{N,k}  ) \right)^{4k} \leq \frac{1}{N} \sum_{i=1}^N \left( \left| X_{N,i}^\varepsilon- \bar{X}_{i}^\varepsilon \right|(s \wedge \tau_{N,k}  ) \right)^{2k}  N^{1-2 \alpha k}  = N^{-2\alpha k} S_{N,k}(t ) N^{1-2 \alpha k}.
\end{align*}
Therefore $I_2$ can be bounded by
\begin{align*}
I_2 \leq C \left\Vert D^2 \Phi^\varepsilon \right\Vert_{L^\infty(\R^2)}^{2k} N^{1-2 \alpha k} \int_0^{t} \ept \left(  S_{N,k}(s) \right) ds.
\end{align*}
The rest term is $II$, an error term which appears in the Gronwall argument, can be estimated directly by using the law of large numbers.
\begin{align*}
II
\leq &  C N^{2\alpha k} \int_0^{t}\ept \bigg( \max_{1 \leq i \leq N} \Big| \frac{1}{N}\displaystyle\sum_{j=1}^N\Phi^\varepsilon((\bar{X}_{i}^\varepsilon - \bar{X}^\varepsilon_{j})(s))  - \Phi^\varepsilon\ast u^\varepsilon(s, \bar{X}^\varepsilon_{i}(
s  )) \Big|^{2k} \bigg)=:C N^{2\alpha k}\int^t_0 II' ds.
\end{align*}
Let $\theta \in (0,\frac{1}{2} )$ and $k_3 \in \N$. We set $\tilde{A}^{N}_\theta(s):=A^{N}_\theta( \Phi^\varepsilon, u^\varepsilon(s))$ and use lemma \ref{lln} to conclude
\begin{align*}
II' \leq &  C \ept \bigg( \mathbbm{1}_{\tilde{A}^{N}_\theta(s)} \max_{1 \leq i \leq N} \Big| \frac{1}{N}\displaystyle\sum_{j=1}^N\Phi^\varepsilon((\bar{X}_{i}^\varepsilon - \bar{X}^\varepsilon_{j})(s  ))  - \Phi^\varepsilon\ast u^\varepsilon(s, \bar{X}^\varepsilon_{i}(
s  )) \Big|^{2k} \bigg) \\
& + C \ept \bigg( \mathbbm{1}_{(\tilde{A}^{N}_\theta(s))^c} \max_{1 \leq i \leq N} \Big| \frac{1}{N}\displaystyle\sum_{j=1}^N\Phi^\varepsilon((\bar{X}_{i}^\varepsilon - \bar{X}^\varepsilon_{j})(s  ))  - \Phi^\varepsilon\ast u^\varepsilon(s, \bar{X}^\varepsilon_{i}(
s  )) \Big|^{2k} \bigg)\\
 \leq &  C \Vert \Phi^\varepsilon \Vert_{L^\infty(\R^2)}^{2k} \pr \left(\tilde{A}^{N,k}_\theta(s) \right) + \frac{C(k)}{N^{2k\theta}}
\\
\leq &  C \Vert \Phi^\varepsilon \Vert_{L^\infty(\R^2)}^{2k}  N^{2{k_3}(\theta-\frac{1}{2})+1} \Vert \Phi^\varepsilon \Vert_{L^\infty(\R^2)}^{2{k_3}} + \frac{C}{N^{2k\theta}},
\end{align*}
which implies the estimate for $II$,
\begin{align*}
II \leq & N^{2\alpha k} \Vert \Phi^\varepsilon \Vert_{L^\infty(\R^2)}^{2k} N^{2{k_3}(\theta-\frac{1}{2})+1} \Vert \Phi^\varepsilon \Vert_{L^\infty(\R^2)}^{2{k_3}} +  C T N^{2\alpha k-2k\theta}.
\end{align*}
We combine all the estimations we did, and obtain
\begin{align*}
& \ept (S_{N,k}(t )) \\
\leq & I +II \leq I_1 + I_2 +II \leq I_{1,1} + I_{1,2} + I_2 + II \leq I_{1,1,1} + I_{1,1,2} + I_{1,2,1} + I_{1,2,2} +I_2 +II \\
\leq & C \bigg( \Vert \nabla \Phi^\varepsilon \Vert_{L^\infty(\R^2)}^{2k}  N^{-k_1+1} \Vert \nabla \Phi^\varepsilon \Vert_{L^\infty(\R^2)}^{2{k_1}} +  \Vert \nabla \Phi^\varepsilon \Vert_{L^\infty(\R^2)}^{2k}  N^{-k_2+1} \Vert \nabla \Phi^\varepsilon  \Vert_{L^\infty(\R^2)}^{\frac{2k}{2k-1}2{k_2}} \\
& +  \Vert \Phi^\varepsilon \Vert_{L^\infty(\R^2)}^{2k} N^{2\alpha k+2{k_3}(\theta-\frac{1}{2})+1} \Vert \Phi^\varepsilon \Vert_{L^\infty(\R^2)}^{2{k_3}} + N^{2(\alpha -\theta)k} \bigg) 
 +   C \Big( \Vert \left| \nabla \Phi^\varepsilon \right| \ast u^\varepsilon \Vert_{L^\infty((0,T) \times \R^2 )}^{2k}\\
  &\qquad+ 1 + \left\Vert \left| \nabla \Phi^\varepsilon \right|^{\frac{2k}{2k-1}} \ast u^\varepsilon \right\Vert_{L^\infty((0,T) \times \R^2 )}^{2k-1} + 1 + \left\Vert D^2 \Phi^\varepsilon \right\Vert_{L^\infty(\R^2)}^{2k} N^{1-2 \alpha k} \Big) \int_0^{t} \ept \left(  S_{N,k}(s ) \right) ds.
\end{align*}
By Gronwall's lemma we conclude $\forall t \in [0,T]$: 
\begin{align} \label{gronw_mean}
& \ept (S_{N,k}(t )) \notag \\
\leq &  C \bigg( \Vert \nabla \Phi^\varepsilon \Vert_{L^\infty(\R^2)}^{2k}  N^{-k_1+1} \Vert \nabla \Phi^\varepsilon \Vert_{L^\infty(\R^2)}^{2{k_1}} +  \Vert \nabla \Phi^\varepsilon \Vert_{L^\infty(\R^2)}^{2k}  N^{-k_2+1} \Vert \nabla \Phi^\varepsilon  \Vert_{L^\infty(\R^2)}^{\frac{2k}{2k-1}2{k_2}} \notag \\
& \qquad + \Vert \Phi^\varepsilon \Vert_{L^\infty(\R^2)}^{2k} N^{2\alpha k+2{k_3}(\theta-\frac{1}{2})+1} \Vert \Phi^\varepsilon \Vert_{L^\infty(\R^2)}^{2{k_3}} + N^{2(\alpha -\theta)k} \bigg) \notag \\
& \cdot \exp \bigg(  C \Big( \Vert \left| \nabla \Phi^\varepsilon \right| \ast u^\varepsilon \Vert_{L^\infty((0,T) \times \R^2 )}^{2k} + 1 + \left\Vert \left| \nabla \Phi^\varepsilon \right|^{\frac{2k}{2k-1}} \ast u^\varepsilon \right\Vert_{L^\infty((0,T) \times \R^2)}^{2k-1} + \left\Vert D^2 \Phi^\varepsilon \right\Vert_{L^\infty(\R^2)}^{2k} N^{1-2 \alpha k} \Big) T \bigg).
\end{align}

In the above inequality, we need to discuss the the asymptotics w.r.t $\varepsilon$ for those terms where $\Phi_\varepsilon$ is involved. To make the right-hand side decay to $0$ as $N$ goes to infinity, we then fix a range for $\varepsilon$ depending on $N$.

In the following we will use the properties \eqref{prop_Phi} of $\Phi$ and the well-known fact that $\Vert j^\varepsilon \Vert_{L^q(\R^2)} = \left( \frac{1}{\varepsilon} \right)^{2\frac{q-1}{q}} \Vert j \Vert_{L^q(\R^2)}$ and $\Vert \nabla j^\varepsilon \Vert_{L^q(\R^2)} = \left( \frac{1}{\varepsilon} \right)^{\frac{3q-2}{q}} \Vert \nabla j \Vert_{L^q(\R^2)}$ for all $q \in [1,\infty )$. Using Young's inequality, it is not hard to infer that
\begin{alignat}{4} \label{est_phi_eps}
 & \forall a>0 : \ & & \exists C>0 : \ & & \forall \varepsilon >0: & & \left\Vert \Phi^{\varepsilon} \right\Vert_{L^\infty(\R^2)} \leq \frac{C}{\varepsilon^a}, \\
 & \forall b>1 : \ & & \exists C>0 : \ & & \forall \varepsilon >0 : & & \left\Vert  \nabla \Phi^{\varepsilon} \right\Vert_{L^\infty(\R^2)} \leq  \frac{C}{\varepsilon^b}, \label{est_D_phi_eps} \\
 &\forall d>2 : \ & & \exists C>0 : \ & & \forall \varepsilon >0 : & & \left\Vert D^2 \Phi^{\varepsilon} \right\Vert_{L^\infty(\R^2)} \leq  \frac{C}{\varepsilon^d}.  \label{est_D^2_phi_eps}
\end{alignat}
The uniform bounds for $u^\varepsilon$ from lemma  \ref{bound_u^eps} lead to a bound for the remaining terms.
\begin{align}\label{nablaPhistaru}
\Vert \left| \nabla \Phi^\varepsilon \right| \ast u^\varepsilon \Vert_{L^\infty((0,T) \times \R^2)} \leq & C \Vert  \nabla \Phi \Vert_{L^{\frac{3}{2}}(\R^2 )} \Vert j^\varepsilon \Vert_{L^1(\R^2 )} \Vert u^\varepsilon \Vert_{L^\infty (0,T;L^3(\R^2))} \leq C
\end{align}

The estimation of the last term requires a slight restriction for $k$. \underline{From now on we assume} $k>1$. Then $\frac{2k}{2k-1}<2$. So we can choose some $p_k >1$ such that $\frac{2k}{2k-1} p_k <2$. Let $q_k$ be dual to $p_k$. Since $q_k < \infty$, we can use lemma \ref{bound_u^eps} in the following.
\begin{align} \label{bound_nabla_phi_eps_power_u_eps}
	& \left\Vert \left| \nabla \Phi^\varepsilon \right|^{\frac{2k}{2k-1}} \ast u^\varepsilon \right\Vert_{L^\infty((0,T)\times \R^2)} \leq \left\Vert \left| \nabla \Phi^\varepsilon \right|^{\frac{2k}{2k-1}} \right\Vert_{L^{p_k}(\R^2)} \left\Vert u^\varepsilon  \right\Vert_{L^\infty(0,T;L^{q_k}(\R^2))}
	\leq  C \left\Vert \nabla \Phi \right\Vert_{L^{\frac{2k}{2k-1}p_k}(\R^2)}^{\frac{2k}{2k-1}} \left\Vert j^\varepsilon \right\Vert_{L^1(\R^2)}^{\frac{2k}{2k-1}} \leq C
\end{align}
Let $a>0,b>1$ and $d>2$. From \eqref{gronw_mean} together with \eqref{est_phi_eps} - \eqref{bound_nabla_phi_eps_power_u_eps} we find for $\ept (S_{N,k}(t )) $
\begin{align*}
& \ept (S_{N,k}(t )) \\
\leq &  C\bigg(  \Big( \frac{C}{\varepsilon^b} \Big)^{2k}  N^{-{k_1}+1} \Big( \frac{C}{\varepsilon^b} \Big)^{2{k_1}} +  \Big( \frac{C}{\varepsilon^b} \Big)^{2k}  N^{-{k_2}+1} \Big( \frac{C}{\varepsilon^b} \Big)^{\frac{2k}{2k-1}2{k_2}} + \Big( \frac{C}{\varepsilon^a} \Big)^{2k} N^{2\alpha k+2{k_3}({\theta}-\frac{1}{2})+1} \Big( \frac{C}{\varepsilon^a} \Big)^{2{k_3}} + N^{2(\alpha -\theta)k} \bigg) \\
& \qquad\qquad\cdot \exp \bigg(  C\Big( C^{2k}+ 1 + C^{2k-1} + \Big( \frac{C}{\varepsilon^d} \Big)^{2k} N^{1-2 \alpha k} \Big) \bigg) \\
\leq & C \Big( \frac{1}{\varepsilon^{2(k+k_1)b}} N^{-{k_1}+1} + \frac{1}{\varepsilon^{2(k+\frac{2k}{2k-1}{k_2})b}} N^{-{k_2}+1} + \frac{1}{\varepsilon^{2(k+k_3)a}}  N^{2\alpha k+2{k_3}({\theta}-\frac{1}{2})+1} + N^{2(\alpha -\theta)k} \Big) \\
& \qquad\qquad\cdot \exp \bigg( C \Big( 1 + \frac{1}{\varepsilon^{2kd}} N^{1-2 \alpha k} \Big) \bigg) .
\end{align*}
Next we include the scaling of $\varepsilon$ w.r.t $N$. Let $\beta >0$ and $\gamma >0$. \underline{From now on we assume} $\varepsilon \geq N^{-\beta}$. Consequently, we obtain
\begin{align} \label{E(S_N_k)_N-bound}
 \ept (S_{N,k}(t )) \leq &  C \Big( N^{2(k+k_1)b\beta -{k_1}+1} + N^{2(k+\frac{2k}{2k-1}{k_2})b\beta -{k_2}+1} + N^{2(k+k_3)a\beta +2\alpha k+2{k_3}({\theta}-\frac{1}{2})+1} + N^{2(\alpha -\theta)k} \Big) \notag \\
&\qquad \qquad \cdot \exp \Big( C \left( 1 + N^{2kd\beta +1-2 \alpha k} \right) \Big) \notag \\
= & C \Big( N^{2kb \beta + (2b \beta  -1)k_1+1} + N^{2kb \beta + \left( \frac{4k}{2k-1} b\beta -1 \right) k_2 +1} + N^{2(a\beta + \alpha )k + 2 \left( a\beta + \theta - \frac{1}{2} \right) k_3 +1} + N^{2(\alpha -\theta)k} \Big) \notag \\
&\qquad \qquad \cdot \exp \Big( C \left( 1 + N^{1+ 2(d \beta - \alpha )k} \right) \Big).
\end{align}
Here we have already sorted by $k,k_1,k_2$ and $k_3$ within the exponents. Now it remains to select the parameters so that the coefficients are negative since they can be amplified by choosing $k,k_1,k_2$ and $k_3$ large. To achieve this, we will gradually add constraints for the parameters in the following.\\

To avoid any contribution of the exponential term, we need $N^{1+ 2(d \beta - \alpha )k}$ to be bounded. We see
\begin{align*}
d \beta - \alpha <0 \Leftrightarrow \beta < \frac{\alpha}{d}.
\end{align*}
\underline{From now on we assume} $\beta < \frac{\alpha}{d} \left( <\frac{\alpha}{2} \right)$. Then it holds
\begin{align} \label{cond_k_alpha_beta}
 1+ 2(d \beta - \alpha )k \leq 0 \Leftrightarrow k \geq - \frac{1}{2d\beta - 2\alpha}.
\end{align}
We want all the terms in the bracket preceding the exponential function to decay like $N^{-\gamma }$. Therefore, we go through them step by step.
\begin{enumerate}
\item[$\cdot$)] $ 2b \beta  -1 <0 \Leftrightarrow \beta < \frac{1}{2b} $.
\underline{From now on we assume} $\beta < \frac{1}{2b} \left( < \frac{1}{2} \right)$. This permits
\begin{align} \label{cond_k1}
2kb \beta + (2b \beta  -1)k_1+1 \leq -\gamma \Leftrightarrow k_1 \geq - \frac{\gamma+2kb \beta +1}{2b \beta -1}. 
\end{align}
\item[$\cdot$)] $\frac{4k}{2k-1} b\beta -1 <0 \Leftrightarrow \frac{2k-1}{4k}>b\beta \Leftrightarrow k > \frac{1}{2(1-2b \beta)} \text{ (note $2b \beta <1$)}$.
\underline{From now on we assume} $k > \frac{1}{2(1-2b \beta)}$. Hence it holds
\begin{align}
2kb \beta + \left( \frac{4k}{2k-1} b\beta -1 \right) k_2 +1 \leq -\gamma \Leftrightarrow k_2 \geq - \frac{\gamma + 2kb \beta +1}{\frac{4k}{2k-1} b\beta -1}.
\end{align}
\item[$\cdot$)] $a\beta + \theta - \frac{1}{2} <0 \Leftrightarrow \beta <\frac{1}{a} \left(\frac{1}{2} - \theta \right) (>0, \text{ since } \theta <\frac{1}{2}) $. \underline{From now on we assume} $\beta < \frac{1}{a} \left(\frac{1}{2} - \theta \right)$. We see
\begin{align} \label{cond_k3}
2(a\beta + \alpha )k + 2 \left( a\beta + \theta - \frac{1}{2} \right) k_3 +1 \leq - \gamma \Leftrightarrow k_3 \geq - \frac{\gamma+2( a \beta - \alpha )k+1}{2 \left( a\beta + \theta - \frac{1}{2} \right)} 
\end{align}
\item[$\cdot$)] \underline{From now on we assume} $\alpha < \theta$. Then $\alpha - \theta <0$ and
\begin{align} \label{cond_k_alpha_theta}
	2(\alpha -\theta)k \leq - \gamma \Leftrightarrow k \geq -\frac{\gamma}{2(\alpha - \theta )}.
\end{align}
\end{enumerate}
The above calculations yield the following: \\
It holds $\alpha < \theta < \frac{1}{2}$. By choosing $\theta=\frac{\alpha}{2}+\frac{1}{4} >0$, $\alpha$ can attain every value in $(0,\frac{1}{2})$, since our assumption $\alpha < \theta < \frac{1}{2}$ is always satisfied. We know that $\beta < \frac{\alpha}{2}$. Now we will argue that we can choose $a,b$ and $d$ such that $\beta$ can attain each value in $(0,\frac{\alpha}{2})$. Since $\frac{\alpha}{\beta} >2$ the choice $d=\frac{\alpha}{2\beta}+1 >2$ guarantees that $\beta < \frac{\alpha}{d}$ is satisfied. Notice that $\beta < \frac{1}{4}$. So we can set $b=\frac{1}{2}+\frac{1}{4\beta}>1$ and $\beta < \frac{1}{2b}$ is satisfied. Finally, since $\frac{\frac{1}{2}- \theta}{\beta}>0$ we can choose $a>0$ such that $\beta < \frac{1}{a} \left(\frac{1}{2} - \theta \right)$. The minimal condition for $k$ is not of our interest, so we impose a stricter condition on $k$ that can be written more concisely. \underline{From now on we assume} $k> \frac{1}{\alpha-2\beta} \vee \frac{2\gamma}{1-2\alpha}$. As we will clarify, this also fulfills all previous assumptions for $k$. First observe that $k \geq \frac{1}{\alpha-2\beta} =  \frac{1}{2\alpha -2d\beta}$ which implies the left-hand side of \eqref{cond_k_alpha_beta}. Furthermore, $\alpha <1$ implies $\frac{1}{\alpha-2\beta} > \frac{1}{1-2\beta}= \frac{1}{2(1-2b \beta)}$, which means that the assumption $k> \frac{1}{2(1-2b \beta)}$ is already fulfilled. Thereby, also the assumption $k>1$ is fulfilled, since $\beta >0$. It follows from $k > \frac{2\gamma}{1-2\alpha} =  \frac{\gamma}{2(\theta - \alpha )}$ that the left-hand side of \eqref{cond_k_alpha_theta} holds. Now we can choose $k_1,k_2$ and $k_3$ such that the left-hand sides of \eqref{cond_k1} - \eqref{cond_k3} hold. Finally we can conclude from \eqref{E(S_N_k)_N-bound} that
\begin{align*}
& \ept (S_{N,k}(t )) \leq C N^{-\gamma}.
\end{align*} 
\end{proof}

\section{Equivalence of convergence in power-mean and maximum}
In this section we show a general statement about the equivalence of the convergence of the k-mean and the maximum in the sense of probability. Afterwards, we will briefly mention how theorem \ref{ks_conv_of_max_X-bar_X} can be obtained from theorem \ref{conv_of_mean}.

\begin{lemma} \label{equi_mean_max}
Let $T \in (0,\infty )$ and $d \in \N$. We consider two families of stochastic processes \\ $Y^\varepsilon_{N,i} : [0,T] \times \Omega \to \R^d$ for $N\in \N, i=1,\dots ,N, \varepsilon >0 $ and $\bar{Y}^\varepsilon_{i} : [0,T] \times \Omega \to \R^d$ for $i \in \N, \varepsilon >0 $. \\
Let $\alpha_0 \in (0, \infty ]$ and $\Lambda : (0, \alpha_0 ) \to (0,\infty ]$ monotonic increasing. Then the following statements are equivalent.
\begin{itemize}
\item[a)] $ \forall \alpha \in (0, \alpha_0 ) : \forall \beta \in (0, \Lambda (\alpha )) :\forall \gamma >0 : \exists k_0 \in (0,\infty ) : \forall k \in (k_0 , \infty ) : \exists C>0 : \forall N \in \N, \varepsilon >0 \text{ with } \varepsilon \geq N^{-\beta} : 
$\begin{align*}
& \sup_{t \in [0,T]} \pr \bigg( \Big\{ \Big( \frac{1}{N} \sum_{i=1}^N \left| Y^\varepsilon_{N,i} - \bar{Y}^\varepsilon_{i} \right|^{k}(t)  \Big)^{\frac{1}{k}} \geq N^{-\alpha} \Big\} \bigg) \leq  \frac{C}{N^\gamma}.
\end{align*}
\item[b)] $ \forall \alpha \in (0, \alpha_0 ) : \forall \beta \in (0, \Lambda (\alpha )) :\forall \gamma >0 : \exists C>0 : \forall N \in \N, \varepsilon >0 \text{ with } \varepsilon \geq N^{-\beta} :$
\begin{align*}
& \sup_{t \in [0,T]} \pr \bigg( \Big\{ \max_{1 \leq i \leq N}  \big| Y^\varepsilon_{N,i} - \bar{Y}^\varepsilon_{i} \big|(t) \geq N^{-\alpha} \Big\} \bigg) \leq  \frac{C}{N^\gamma}.
\end{align*}
\item[c)] $\forall \alpha \in (0, \alpha_0 ) : \forall \beta \in (0, \Lambda (\alpha )) :\forall \gamma >0 : \exists C>0 : \forall k \in (0 , \infty ) : \forall N \in \N, \varepsilon >0 \text{ with } \varepsilon \geq N^{-\beta} : $ 
\begin{align*}
& \sup_{t \in [0,T]} \pr \bigg( \Big\{ \Big( \frac{1}{N} \sum_{i=1}^N \left| Y^\varepsilon_{N,i} - \bar{Y}^\varepsilon_{i} \right|^{k}(t)  \Big)^{\frac{1}{k}} \geq N^{-\alpha} \Big\} \bigg) \leq  \frac{C}{N^\gamma}.
\end{align*}
\end{itemize}
\end{lemma}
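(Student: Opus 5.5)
We prove the cycle c) $\Rightarrow$ a) $\Rightarrow$ b) $\Rightarrow$ c). Throughout we abbreviate $D_i(t):=|Y^\varepsilon_{N,i}-\bar Y^\varepsilon_i|(t)$, $M_k:=(\frac1N\sum_i D_i^k)^{1/k}$ and $M_\infty:=\max_i D_i$. Only the elementary inequalities
\begin{align*}
N^{-1/k} M_\infty \le M_k \le M_\infty, \qquad k\in(0,\infty),
\end{align*}
are used. The left one holds because $\max_i D_i^k \le \sum_i D_i^k$, and the right one because each $D_i\le M_\infty$.

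\textbf{c) $\Rightarrow$ a).} This is immediate. In c) the constant $C$ is chosen before $k$ and works for every $k\in(0,\infty)$. So given $\alpha,\beta,\gamma$ we may take any $k_0$, say $k_0=1$, and for each $k>k_0$ use the same $C$ from c).

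\textbf{b) $\Rightarrow$ c).} Since $M_k\le M_\infty$, we have $\{M_k\ge N^{-\alpha}\}\subset\{M_\infty\ge N^{-\alpha}\}$ for every $k$. Hence the constant $C=C(\alpha,\beta,\gamma)$ from b) bounds the probability in c) uniformly in $k$ and in $t\in[0,T]$. This gives c) with the quantifier order exactly as stated.

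\textbf{a) $\Rightarrow$ b).} This is the only step with content: we must trade a little of the exponent $\alpha$, and this is where the monotonicity of $\Lambda$ is used. Fix $\alpha\in(0,\alpha_0)$, $\beta\in(0,\Lambda(\alpha))$ and $\gamma>0$. Since $\Lambda(\alpha)$ is a strict upper bound for $\beta$ and $\Lambda$ is monotone increasing, we can choose $\alpha'\in(\alpha,\alpha_0)$; then $\Lambda(\alpha')\ge\Lambda(\alpha)>\beta$, so $\beta\in(0,\Lambda(\alpha'))$. (If $\alpha_0=\infty$ any $\alpha'>\alpha$ works.) Apply a) with the triple $(\alpha',\beta,\gamma)$. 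This yields $k_0$, and we pick $k>k_0$ so large that $k\ge 1/(\alpha'-\alpha)$, i.e. $\frac1k-\alpha'\le-\alpha$. Using $M_\infty\le N^{1/k}M_k$ we get
\begin{align*}
\{M_\infty\ge N^{-\alpha}\}\subset\{M_k\ge N^{-\alpha-1/k}\}\subset\{M_k\ge N^{-\alpha'}\},
\end{align*}
since $N^{-\alpha-1/k}\ge N^{-\alpha'}$ for $N\ge1$. Taking probabilities and the supremum over $t$ gives the bound $C/N^\gamma$, with $C$ the constant a) provides for this $k$. The constant is independent of $N$ and $\varepsilon$ within the range $\varepsilon\ge N^{-\beta}$, which proves b).

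The one delicate point is keeping $\beta$ admissible when $\alpha$ is raised to $\alpha'$. That is exactly what the monotonicity of $\Lambda$ guarantees, together with the openness of the interval $(0,\alpha_0)$, which leaves room to pick $\alpha'>\alpha$. As a remark on how this is used afterwards: Theorem \ref{conv_of_mean} is statement a) with $\Lambda(\alpha)=\alpha/2$, $\alpha_0=1/2$ and $Y^\varepsilon_{N,i}=X^\varepsilon_{N,i}$, $\bar Y^\varepsilon_i=\bar X^\varepsilon_i$, so the lemma then yields Theorem \ref{ks_conv_of_max_X-bar_X}.
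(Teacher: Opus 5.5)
Your proof is correct and follows the paper's argument essentially verbatim. It uses the same cycle c)$\Rightarrow$a)$\Rightarrow$b)$\Rightarrow$c). The only substantive step, a)$\Rightarrow$b), is handled the same way: you raise $\alpha$ to some $\alpha'\in(\alpha,\alpha_0)$, which stays admissible for $\beta$ by monotonicity of $\Lambda$, and use $\max_i D_i\le N^{1/k}M_k$ with $k$ large enough that $\alpha+1/k\le\alpha'$. Your explicit choices, $k_0=1$ in c)$\Rightarrow$a) and $k\ge 1/(\alpha'-\alpha)$ in a)$\Rightarrow$b), only spell out what the paper leaves implicit.
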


\begin{remark} By considering stochastic processes that are constant in time, we obtain a version of the above lemma for random variables. However, for our purpose, the original form of the lemma is more suitable, since we need to maintain the time independence of the constant $C$. The introduction of the function $\Lambda$ enables us to keep the explicit dependence of $\beta$ on $\alpha$, when switching between mean and maximum. The statement would still be true in a more general form, if we replace $``\forall \beta \in (0, \Lambda (\alpha ))"$ by $``\exists \beta_0>0 : \forall \beta \in (0, \beta_0 )"$. However, in this form we loose the information about the range of $\beta$. In view of $c)$ we see that e.g. the arithmetic mean is included by choosing $k=1$. There is a natural way to extend $c)$ to the case $k=0$ and so allowing $``k \in [0,\infty )"$. Since  $\Big( \frac{1}{N} \sum\limits_{i=1}^N \left| x_{i} \right|^{k} \Big)^{\frac{1}{k}}$ converges to $\Big(  \prod\limits_{i=1}^N \left| x_{i} \right| \Big)^{\frac{1}{N}}$ the geometric mean, for $k \to 0$, it provides a canonical choice. One can argue in the same way as in the proof of $``b)\Rightarrow c)"$ (note $\Big( \frac{1}{N} \sum\limits_{1 \leq i \leq N} \left| x_{i} \right|^{k} \Big)^{\frac{1}{k}} \geq \Big(  \prod\limits_{1 \leq i \leq N} \left| x_{i} \right| \Big)^{\frac{1}{N}}, k>0$). So we also obtain the convergence of the geometric mean here.
\end{remark}

Next we prove lemma \ref{equi_mean_max}. The step $``a) \Rightarrow b)"$ is  clearly the essential part of the argument.

\begin{proof} ``$a) \Rightarrow b):$ '' 
	
	Let $\alpha \in (0, \alpha_0 )$ and further $\beta \in (0, \Lambda (\alpha ))$, $\gamma >0$. For all $t \in [0,T], k>0, \varepsilon >0, N \in \N$ it holds
\begin{align*}
 \Big\{ \max_{1 \leq i \leq N}  \left| Y^\varepsilon_{N,i} - \bar{Y}^\varepsilon_{i} \right|(t) \geq N^{-\alpha} \Big\} \subset \bigg\{ \Big( \frac{1}{N} \sum_{i=1}^N \left| Y^\varepsilon_{N,i} - \bar{Y}^\varepsilon_{i} \right|^{k}(t)  \Big)^{\frac{1}{k}} \geq N^{-(\alpha + \frac{1}{k})} \bigg\},
\end{align*}
since $ \max\limits_{1 \leq i \leq N}  \left| Y^\varepsilon_{N,i} - \bar{Y}^\varepsilon_{i} \right|(t) \leq \Big( \sum\limits_{i=1}^N \left| Y^\varepsilon_{N,i} - \bar{Y}^\varepsilon_{i} \right|^{k}(t)  \Big)^{\frac{1}{k}} =  \Big( \frac{1}{N} \sum\limits_{i=1}^N \left| Y^\varepsilon_{N,i} - \bar{Y}^\varepsilon_{i} \right|^{k}(t)  \Big)^{\frac{1}{k}} N^{\frac{1}{k}}$. \\

Now we choose $\tilde{\alpha}$ such that $\alpha < \tilde{\alpha} < \alpha_0$. Since $\Lambda$ is monotonic increasing, we have $\beta \in (0,\Lambda (\tilde{\alpha}))$. By using the assumption $a)$, we can choose some $k >0$ such that there exists some $C>0$  such that:
\begin{align*}
\forall N \in \N, \varepsilon >0 \text{ with } \varepsilon \geq N^{-\beta} : \sup_{t \in [0,T]} \pr \bigg( \Big\{ \Big( \frac{1}{N}\sum\limits_{i=1}^N \left| Y^\varepsilon_{N,i} - \bar{Y}^\varepsilon_{i} \right|^{k}(t)  \Big)^{\frac{1}{k}} \geq N^{-\tilde{\alpha}} \Big\} \bigg) \leq  \frac{C}{N^\gamma},
\end{align*}
and in addition $\alpha < \alpha + \frac{1}{k} < \tilde{\alpha}$. Finally we can conclude for all  $N \in \N$ and $\varepsilon >0 \text{ with } \varepsilon \geq N^{-\beta}$:
\begin{align*}
& \sup_{t \in [0,T]} \pr \Big( \Big\{ \max_{1 \leq i \leq N}  \left| Y^\varepsilon_{N,i} - \bar{Y}^\varepsilon_{i} \right|(t) \geq N^{-\alpha} \Big\} \Big) \leq \sup_{t \in [0,T]} \pr \bigg( \Big\{ \Big( \frac{1}{N} \sum_{i=1}^N \left| Y^\varepsilon_{N,i} - \bar{Y}^\varepsilon_{i} \right|^{k}(t)  \Big)^{\frac{1}{k}} \geq N^{-(\alpha + \frac{1}{k})} \Big\} \bigg) \\
\leq & \sup_{t \in [0,T]} \pr \bigg( \Big\{ \Big( \frac{1}{N} \sum_{i=1}^N \left| Y^\varepsilon_{N,i} - \bar{Y}^\varepsilon_{i} \right|^{k}(t)  \Big)^{\frac{1}{k}} \geq N^{-\tilde{\alpha}} \Big\} \bigg) \leq \frac{C}{N^\gamma}.
\end{align*}
``$b) \Rightarrow c):$''

 Let $\alpha \in (0, \alpha_0 )$ and further $\beta \in (0, \Lambda (\alpha ))$, $\gamma >0$. For all $t \in [0,T], k>0, \varepsilon >0, N \in \N$ it holds
\begin{align*}
 \bigg\{ \Big( \frac{1}{N} \sum\limits_{i=1}^N \left| Y^\varepsilon_{N,i} - \bar{Y}^\varepsilon_{i} \right|^{k}(t)  \Big)^{\frac{1}{k}} \geq N^{-\alpha} \bigg\} \subset \Big\{ \max_{1 \leq i \leq N}  \left| Y^\varepsilon_{N,i} - \bar{Y}^\varepsilon_{i} \right|(t) \geq N^{-\alpha} \Big\} ,
\end{align*}
since $\Big( \frac{1}{N} \sum\limits_{i=1}^N \left| Y^\varepsilon_{N,i} - \bar{Y}^\varepsilon_{i} \right|^{k}(t)  \Big)^{\frac{1}{k}} \leq \Big( \frac{1}{N} \sum\limits_{i=1}^N \max\limits_{1 \leq i \leq N} \left| Y^\varepsilon_{N,i} - \bar{Y}^\varepsilon_{i} \right|^{k}(t)  \Big)^{\frac{1}{k}} = \max\limits_{1 \leq i \leq N}  \left| Y^\varepsilon_{N,i} - \bar{Y}^\varepsilon_{i} \right|(t)$. \\
In view of assumption $b)$ there exists some $C>0$ such that
\begin{align*}
\forall N \in \N, \varepsilon >0 \text{ with } \varepsilon \geq N^{-\beta} : 
\sup_{t \in [0,T]} \pr \left( \left\lbrace \max_{1 \leq i \leq N}  \left| Y^\varepsilon_{N,i} - \bar{Y}^\varepsilon_{i} \right|(t) \geq N^{-\alpha} \right\rbrace \right) \leq  \frac{C}{N^\gamma}.
\end{align*}
Putting everything together we find for all $k>0$ and for all $N \in \N$ and $\varepsilon >0 \text{ with } \varepsilon \geq N^{-\beta}$:
\begin{align*}
\sup_{t \in [0,T]} \pr \bigg( \Big\{ \Big( \frac{1}{N} \sum_{i=1}^N \left| Y^\varepsilon_{N,i} - \bar{Y}^\varepsilon_{i} \right|^{k}(t)  \Big)^{\frac{1}{k}} \geq N^{-\alpha} \Big\}  \bigg) \leq \sup_{t \in [0,T]} \pr \Big(  \Big\{ \max_{1 \leq i \leq N}  \left| Y^\varepsilon_{N,i} - \bar{Y}^\varepsilon_{i} \right|(t) \geq N^{-\alpha} \Big\} \Big) \leq \frac{C}{N^\gamma}.
\end{align*}
``$c) \Rightarrow a):$'' Trivial.
\end{proof}

Now we can see that theorem \ref{ks_conv_of_max_X-bar_X} follows directly from theorem \ref{conv_of_mean} and lemma \ref{equi_mean_max} by choosing $\alpha_0:=\frac{1}{2}$ and $\Lambda (\alpha ):=\frac{\alpha}{2}$.

\section{Proof of theorem \ref{ks_conv_of_max_X-hat_X} and \ref{poc_strong}}
This section discuss the step from $\bar{X}^\varepsilon_{i}$ to $\hat{X}_{i}$. Then by combing this result with the convergence from theorem \ref{ks_conv_of_max_X-bar_X}, we obtain theorem \ref{ks_conv_of_max_X-hat_X}. Based on the latter, we also provide the strong propagation of chaos stated in theorem \ref{poc_strong} by using relative entropy methods. 

We start with a generalization of \cite[Proposition 15]{BOL2026113712}.

\begin{lemma} \label{conv_ept} Let $T >0$. Under the assumptions \ref{ass_ks} it holds:
$\forall k \in [1,\infty) : \exists  C>0 : \forall i \in \N,  \varepsilon >0 : $
\begin{align*}
\ept \Big( \sup_{t\in [0,T] } | \bar{X}^\varepsilon_i - \hat{X}_i |^{2k}(t) \Big) \leq  C \varepsilon^{2k}.
\end{align*}
\end{lemma}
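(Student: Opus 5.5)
We compare the two McKean--Vlasov diffusions driven by the same Brownian motion $B_i$ and the same initial datum $\zeta_i$. Set
\[
\sigma^\varepsilon(t,x):=\big(2e^{-\Phi^\varepsilon\ast u^\varepsilon(t,x)}+2\big)^{1/2},\qquad \sigma(t,x):=\big(2e^{-\Phi\ast u(t,x)}+2\big)^{1/2}.
\]
Then
\[
(\bar X^\varepsilon_i-\hat X_i)(t)=\int_0^t \big(\sigma^\varepsilon(s,\bar X^\varepsilon_i(s))-\sigma(s,\hat X_i(s))\big)\,dB_i(s),
\]
which is a genuine martingale for each fixed $i$. There is no maximum over particles here, so the difficulty of Section 2 does not arise. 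Our plan is to use the BDG inequality with exponent $2k$, followed by Hölder in time, to obtain
\[
\ept\Big(\sup_{s\le t}|\bar X^\varepsilon_i-\hat X_i|^{2k}(s)\Big)\le C\,\ept\int_0^t\big|\sigma^\varepsilon(s,\bar X^\varepsilon_i)-\sigma(s,\hat X_i)\big|^{2k}\,ds .
\]
Next we split the integrand as $\sigma^\varepsilon(s,\bar X^\varepsilon_i)-\sigma(s,\bar X^\varepsilon_i)$ plus $\sigma(s,\bar X^\varepsilon_i)-\sigma(s,\hat X_i)$. On the relevant ranges, both $\sqrt{2\cdot+2}$ and $\exp(-\cdot)$ are Lipschitz, because $\Phi^\varepsilon\ast u^\varepsilon\ge0$ and $\Phi\ast u\ge 0$. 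Hence it suffices to control $\Phi^\varepsilon\ast u^\varepsilon-\Phi\ast u$ in $L^\infty$ and $\nabla(\Phi\ast u)$ in $L^\infty$.

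For the second piece we use the regularity of $u$ from \cite[Theorem 2]{BOL2026113712} under Assumption \ref{ass_ks}: $u\in L^\infty(0,T;L^q)$ for some $q>2$. Combined with $\nabla\Phi\in L^p$ for $p<2$ from \eqref{prop_Phi}, Young's inequality gives $\|\nabla\Phi\ast u\|_{L^\infty((0,T)\times\R^2)}\le C$. Therefore the second piece is bounded by $C|\bar X^\varepsilon_i-\hat X_i|(s)$. This yields a term $C\int_0^t\ept\sup_{r\le s}|\bar X^\varepsilon_i-\hat X_i|^{2k}(r)\,ds$, to which Gronwall applies. Since the laws are identical in $i$, the constant is independent of $i$.

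For the first piece we write
\[
\Phi^\varepsilon\ast u^\varepsilon-\Phi\ast u=(\Phi\ast j^\varepsilon-\Phi)\ast u+\Phi^\varepsilon\ast(u^\varepsilon-u).
\]
For the first summand, $|(\Phi\ast j^\varepsilon-\Phi)\ast u|\le\varepsilon\,\|\nabla\Phi\ast u\|_\infty\le C\varepsilon$, using the bound just discussed together with $\operatorname{supp} j\subset B_1$. For the second summand, Young gives $\|\Phi^\varepsilon\ast(u^\varepsilon-u)\|_\infty\le \|\Phi\|_{L^{p'}}\|u^\varepsilon-u\|_{L^\infty(0,T;L^{p})}$ for a suitable $p\in(1,\infty)$.

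The main obstacle is therefore the PDE-level rate $\|u^\varepsilon-u\|_{L^\infty(0,T;L^p)}\le C\varepsilon$. This is exactly the ingredient behind \cite[Proposition 15]{BOL2026113712}, where the case $k=1$ is proved. We intend to reuse that estimate verbatim: it is an energy/duality estimate for $\partial_t(u^\varepsilon-u)=\Delta\big((e^{-v^\varepsilon}+1)(u^\varepsilon-u)+(e^{-v^\varepsilon}-e^{-v})u\big)$, using the uniform bounds of lemma \ref{bound_u^eps}. Since it is deterministic, it is independent of $k$.

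Combining these bounds, the first piece contributes $C T\varepsilon^{2k}$. Gronwall then gives
\[
\ept\Big(\sup_{t\in[0,T]}|\bar X^\varepsilon_i-\hat X_i|^{2k}(t)\Big)\le C\varepsilon^{2k},
\]
with $C$ depending on $k$ and $T$ but not on $i$ or $\varepsilon$. The only change from the $k=1$ case is the use of the BDG constant for power $2k$.
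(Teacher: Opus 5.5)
Your proof is correct. Its skeleton is the paper's: BDG with exponent $2k$, H\"older in time, the Lipschitz reduction to $\Phi^\varepsilon\ast u^\varepsilon(s,\bar X^\varepsilon_i)-\Phi\ast u(s,\hat X_i)$, Gronwall, and the external rate $\|u^\varepsilon-u\|_{L^\infty(0,T;L^2(\R^2))}\le C\varepsilon$ from \cite[Theorem 2]{BOL2026113712}. In your second summand you should simply take $p=2$. That is exactly the norm in which this rate is available, and $\Phi\in L^2$ by \eqref{prop_Phi}, so $\|\Phi^\varepsilon\ast(u^\varepsilon-u)\|_{L^\infty}\le\|\Phi\|_{L^2}\|u^\varepsilon-u\|_{L^2}\le C\varepsilon$.

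The genuine difference is how you handle the $\varepsilon$-error. The paper splits into three terms $A_1,A_2,A_3$. For $A_1$ and $A_3$ it uses that $\bar X^\varepsilon_i(s)$ and $\hat X_i(s)$ have laws $u^\varepsilon(s)$ and $u(s)$. This turns the expectations into $\int(\cdot)^{2k}u^\varepsilon\,dx$ and $\int(\cdot)^{2k}u\,dx$. These are then bounded by Cauchy--Schwarz and Young with the $k$-dependent exponent $\frac{4k}{2k+1}<2$, together with \cite[Lemma 13]{BOL2026113712} for $\|\Phi^\varepsilon-\Phi\|_{L^{4k/(2k+1)}}\le\varepsilon\|\nabla\Phi\|_{L^{4k/(2k+1)}}$.

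You instead split into two terms and bound $\sigma^\varepsilon-\sigma$ deterministically in $L^\infty((0,T)\times\R^2)$. The mollification error is controlled by $\varepsilon\|\nabla\Phi\ast u\|_{L^\infty}$, which is the same Lipschitz bound you already need for the Gronwall term. The PDE error is controlled by the $L^2$--$L^2$ Young estimate above.

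What each route buys:
\begin{itemize}
\item Your route is shorter. It needs neither the identification of the laws in the error terms nor $k$-dependent exponents, and it gives a pathwise $O(\varepsilon)$ bound on the mismatch of the noise coefficients.
\item The paper's route is the direct extension of the $k=1$ argument of \cite[Proposition 15]{BOL2026113712}. It only requires $\Phi\in L^{4k/(2k+1)}$, an exponent below $2$, instead of $\Phi\in L^2$, so it would be more robust for a less integrable kernel. For the Yukawa potential nothing is lost either way.
\end{itemize}

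Your observation that nonnegativity of $\Phi^\varepsilon\ast u^\varepsilon$ and $\Phi\ast u$ is what makes $\exp(-\cdot)$ Lipschitz on the relevant range is a precision the paper leaves implicit.
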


\begin{proof} Let $T \in (0,\infty)$ and $ k \in [1,\infty)$. Further let
$t \in [0,T]$, $i \in \N$ and $\varepsilon >0$. In the whole proof $C$ denotes a generic constant which may vary in each step and which is independent of $t$, $\varepsilon$ or $i$. (So $C$ is allowed to depend on $k$.) As abbreviation we introduce
$$ \sigma_{i}(s) := \sqrt{2 \exp \big( -\Phi^\varepsilon \ast u^\varepsilon(s,\bar{X}^\varepsilon_i(s)) \big)+ 2  } - \sqrt{2 \exp \big(- \Phi \ast u(s,\hat{X}_i(s)) \big)+2  }.$$
First we use the Burkholder-Davis-Gundy inequality, which leads to
\begin{align*}
\ept \Big( \sup_{s\in [0,t] } | \bar{X}^\varepsilon_i - \hat{X}_i |^{2k}(s) \Big) 
= & \ept \bigg( \sup_{s\in [0,t] } \Big| \int_0^s  \sigma_{i} dB_i \Big|^{2k}  \bigg) \leq  C \ept \bigg( \Big( \int_0^t  \sigma_{i}^2 d s \Big)^k \bigg).
\end{align*}
By estimating the time integral with Hölder's inequality and taking into account that $\sqrt{2(\cdot )+2}$ and $\exp(-(\cdot ))$ are Lipschitz continuous, we obtain
\begin{align*}
 \ept \Big( \sup_{s\in [0,t] } | \bar{X}^\varepsilon_i - \hat{X}_i |^{2k}(s) \Big)  \leq  C \ept  \left( \int_0^t  \sigma_{i}^{2k} d s \right) 
\leq &  C \ept  \left( \int_0^t \big| \Phi^\varepsilon \ast u^\varepsilon(s,\bar{X}^\varepsilon_i(s  )) -   \Phi \ast u(s,\hat{X}_i(s  )) \big|^{2k} d s \right).
\end{align*}
Now we divide the integrand in three parts:
\begin{align*}
A_1 := &   \Phi^\varepsilon \ast u^\varepsilon(s,\bar{X}^\varepsilon_i)-\Phi^\varepsilon \ast u(s,\bar{X}^\varepsilon_i), \ A_2 := \Phi^\varepsilon \ast u(s,\bar{X}^\varepsilon_i)- \Phi^\varepsilon \ast u(s,\hat{X}_i), \ A_3 := \Phi^\varepsilon \ast u(s,\hat{X}_i)- \Phi \ast u(s,\hat{X}_i).
\end{align*}
We estimate accordingly using the convexity of $( \cdot )^{2k}$.
\begin{align} \label{A5r}
&  \ept \Big( \sup_{s\in [0,t] } | \bar{X}^\varepsilon_i - \hat{X}_i |^{2k}(s) \Big)   \leq   C(k) \ept \bigg( \int_0^{t} A_1^{2k} +  A_2^{2k} + A_3^{2k} d s \bigg).
\end{align}
We discuss the terms separately starting with $A_1$. A transformation yields
\begin{align}
\nonumber\ept \left(\int_0^t A_1^{2k}(s)\,ds \right) = & \int_0^t \int_{\R^2} \left( \Phi^\varepsilon \ast u^\varepsilon(s,x) -  \Phi^\varepsilon \ast u(s,x) \right)^{2k} u^\varepsilon(s,x) \,dx d s \\
\nonumber\leq & T \|\Phi^\varepsilon \ast (u^\varepsilon-u)\|_{L^\infty(0,T;L^{4k}(\mathbb{R}^2))}^{2k} \|u^\varepsilon\|_{L^\infty(0,T;L^2(\mathbb{R}^2))}  \\
\nonumber \leq &  C \|\Phi^\varepsilon \|_{L^{\frac{4k}{2k+1}}(\mathbb{R}^2)}^{2k} \| (u^\varepsilon-u) \|_{L^\infty(0,T;L^{2}(\mathbb{R}^2))}^{2k} \|u^\varepsilon\|_{L^\infty(0,T;L^2(\mathbb{R}^2))} \\
\label{A6r} \leq &  C \|\Phi \|_{L^{\frac{4k}{2k+1}}(\mathbb{R}^2)}^{2k} \|j^\varepsilon \|_{L^1(\mathbb{R}^2)}^{2k} \| (u^\varepsilon-u) \|_{L^\infty(0,T;L^{2}(\mathbb{R}^2))}^{2k} \|u^\varepsilon\|_{L^\infty(0,T;L^2(\mathbb{R}^2))} \leq C \varepsilon^{2k} ,
\end{align}
where in the last step we used the uniform bound from lemma \ref{bound_u^eps} and the error estimate $\|u^\varepsilon- u\|_{L^{\infty} (0,T; L^2 (\R^2) ) } \leq  C \varepsilon$ from \cite[Theorem 2]{BOL2026113712}.

To treat $A_2$, we mention that $u \in L^q(\R^2)$ for all $q < \infty$, as stated in \cite[Theorem 2]{BOL2026113712}. Therefore we can conclude
\begin{align}\label{A7r}
\ept \left(\int_0^t A_2^{2k}(s)\,ds \right)
\leq  & \|\nabla\Phi^\varepsilon\ast u\|_{L^\infty((0,T)\times\mathbb{R}^2)}^ {2k}
 \ept \left( \int_0^t |\bar{X}^\varepsilon_i- \hat{X}_i  |^{2k}(s) \,d s \right)
\leq  C(k)\int_0^t \ept \left( |\bar{X}^\varepsilon_i- \hat{X}_i  |^{2k}(s) \right) \,d s.
\end{align}
To estimate the last term we argue similarly to $A_1$, keeping in mind that $u \in L^2(\R^2)$.
\begin{align*}
\ept \left(\int_0^t A_3^{2k}(s)\,ds \right) 
 &= \int_0^t \int_{\R^2} \left( \Phi^\varepsilon \ast u(s,x) -  \Phi\ast u(s,x) \right)^{2k} u(x,s) \, dx ds \\
  &\leq c\|\Phi^\varepsilon-\Phi\|_{L^{\frac{4k}{2k+1}}(\mathbb{R}^2)}^{2k}\|u\|_{L^\infty(0,T;L^2(\mathbb{R}^2))}^{2k+1}. 
\end{align*}
Since $\frac{4k}{2k+1} < 2$, it holds $\nabla \Phi \in L^{\frac{4k}{2k+1}}(\mathbb{R}^2)$ by \eqref{prop_Phi} and by  \cite[Lemma 13]{BOL2026113712} we get $\|\Phi \ast j^\varepsilon -\Phi\|_{L^{\frac{4k}{2k+1}}(\mathbb{R}^2)} \leq  \| \nabla \Phi \|_{L^{\frac{4k}{2k+1}}(\mathbb{R}^2)} \varepsilon $. This leads to
\begin{align}\label{A8r}
\ept \Big(\int_0^t A_3^{2k}(s)\,ds \Big) \leq C \| \nabla \Phi \|_{L^{\frac{4k}{2k+1}}(\mathbb{R}^2)}^{2k} \varepsilon^{2k} \leq  C \varepsilon^{2k}.
\end{align}
In view of \eqref{A5r} the estimations \eqref{A6r}, \eqref{A7r} and \eqref{A8r} imply
$$ \ept \Big( \sup_{s\in [0,t] } | \bar{X}^\varepsilon_i - \hat{X}_i |^{2k}(s) \Big) \leq  C \varepsilon^{2k} +  C \int_0^t \ept \Big( \sup_{r\in [0,s] } |\bar{X}^\varepsilon_i- \hat{X}_i |^{2k}(r)\Big) \,d s \ \forall t \in [0,T] . $$ 
Using Gronwall's lemma we conclude
$$ \ept \Big( \sup_{s\in [0,T] } | \bar{X}^\varepsilon_i - \hat{X}_i |^{2k}(s) \Big) \leq  C \varepsilon^{2k} .$$
\end{proof}

Then combining theorem \ref{ks_conv_of_max_X-bar_X} and lemma \ref{conv_ept}, we can prove theorem \ref{ks_conv_of_max_X-hat_X} in the following:

\begin{proof}[Proof of theorem \ref{ks_conv_of_max_X-hat_X}]

Let $T \in (0,\infty )$, $\beta \in (0, \frac{1}{4}) $ and $\gamma >0$. Further let $t \in [0,T]$, $2\leq N\in \N$. We set $\varepsilon := N^{- \beta}$. In the whole proof $C$ denotes a generic  constant which may vary in each step and which is independent of $t$ or $N$ (and hence $\varepsilon$). (So $C$ is allowed to depend on $k$, which soon will appear in the proof.) \\

\noindent
First we compare $ X^\varepsilon_{N,i}$ and $\hat{X}_{i}$ to $\bar{X}^\varepsilon_{i}$.
\begin{align} \label{ks_conv_of_max_X-hat_X_pf_X-hat_X}
& \pr \Big( \Big\{ \max_{1 \leq i \leq N}  \left| X^\varepsilon_{N,i} - \hat{X}_{i} \right|(t) \geq \varepsilon^\eta \Big\}\Big) \leq \pr \Big( \Big\{ \max_{1 \leq i \leq N}  \left| X^\varepsilon_{N,i} - \bar{X}^\varepsilon_{i} \right|(t) + \max_{1 \leq i \leq N}  \left| \bar{X}^\varepsilon_{i} - \hat{X}_{i} \right|(t)\geq \varepsilon^\eta \Big\} \Big) \notag \\
\leq & \pr \Big( \Big\{ \max_{1 \leq i \leq N}  \left| X^\varepsilon_{N,i} - \bar{X}^\varepsilon_{i} \right|(t) \geq \varepsilon^{2\eta } \Big\} \Big) + \pr \Big( \Big\{ \max_{1 \leq i \leq N}  \left| \bar{X}^\varepsilon_{i} - \hat{X}_{i} \right|(t)\geq \varepsilon^\eta(1-\varepsilon^\eta ) \Big\} \Big).
\end{align}

\noindent
We start with the estimation of $\bar{X}^\varepsilon_{i} - \hat{X}_{i}$. Let $k \geq 2$. Since we want to use Markov's inequality, note that \\
 $\varepsilon = N^{- \beta} \leq 2^{- \beta } <1$. Together with lemma \ref{conv_ept} we obtain
\begin{align*}
& \pr \Big( \Big\{ \max_{1 \leq i \leq N}  \left| \bar{X}^\varepsilon_{i} - \hat{X}_{i} \right|(t)\geq \varepsilon^\eta(1-\varepsilon^\eta ) \Big\} \Big) = \pr \Big( \Big\{ \max_{1 \leq i \leq N}  \left| \bar{X}^\varepsilon_{i} - \hat{X}_{i} \right|^k(t)\geq \varepsilon^{\eta k}(1-\varepsilon^\eta )^k \Big\} \Big) \\
\leq & \pr \Big( \Big\{ \sum_{i=1}^N  \left| \bar{X}^\varepsilon_{i} - \hat{X}_{i} \right|^k(t)\geq \varepsilon^{\eta k}(1-\varepsilon^\eta )^k \Big\} \Big) \leq \varepsilon^{-\eta k}(1-\varepsilon^\eta )^{-k} \sum_{i=1}^N  \ept \left( \left| \bar{X}^\varepsilon_{i} - \hat{X}_{i} \right|^k(t) \right)\\
\leq & \varepsilon^{-\eta k}(1-\varepsilon^\eta )^{-k} \sum_{i=1}^N C \varepsilon^k =  C (1-\varepsilon^\eta )^{-k} \varepsilon^{(1-\eta )k} N = C (1-\varepsilon^\eta )^{-k} \varepsilon^{(1-\eta )k - \frac{1}{\beta}}.
\end{align*}
Because $\varepsilon \leq 2^{- \beta }$, the term $(1-\varepsilon^\eta )^{-k}$ is bounded w.r.t $\varepsilon$. By choosing $k \geq \frac{\gamma + \frac{1}{\beta}}{1-\eta} $ such that $(1-\eta )k - \frac{1}{\beta} \geq \gamma$, we find
\begin{align} \label{ks_conv_of_max_X-hat_X_pf_bar_X-hat_X}
\pr \Big( \Big\{ \max_{1 \leq i \leq N}  \left| \bar{X}^\varepsilon_{i} - \hat{X}_{i} \right|(t)\geq \varepsilon^\eta(1-\varepsilon^\eta ) \Big\} \Big) \leq  C \varepsilon^\gamma .
\end{align}

\noindent
Next we will discuss the estimation of $X^\varepsilon_{N,i} - \bar{X}^\varepsilon_{i}$. Since $2 \beta < \frac{1}{2}$, we can choose $\alpha >0$ such that $2 \beta < \alpha < \frac{1}{2}$. By theorem \ref{ks_conv_of_max_X-bar_X} it holds
\begin{align*}
\sup_{t \in [0,T]} \pr \Big( \Big\{ \max_{1 \leq i \leq N}  \left| X^\varepsilon_{N,i} - \bar{X}^\varepsilon_{i} \right|(t) \geq N^{-\alpha} \Big\} \Big) \leq  \frac{C}{N^{\beta \gamma }} .
\end{align*}
Since $\varepsilon^{2\eta } = N^{-2 \beta \eta } \geq N^{- \alpha } $, we can conclude
\begin{align} \label{ks_conv_of_max_X-hat_X_pf_X-bar_X}
\pr \Big( \Big\{ \max_{1 \leq i \leq N} \left| X^\varepsilon_{N,i} - \bar{X}^\varepsilon_{i} \right|(t) \geq \varepsilon^{2\eta } \Big\} \Big) \leq \pr \Big( \Big\{ \max_{1 \leq i \leq N} \left| X^\varepsilon_{N,i} - \bar{X}^\varepsilon_{i} \right|(t) \geq N^{- \alpha} \Big\} \Big) \leq \frac{C}{N^{\beta \gamma }} =  C \varepsilon^\gamma .
\end{align}

\noindent
Combining \eqref{ks_conv_of_max_X-hat_X_pf_X-hat_X} with \eqref{ks_conv_of_max_X-hat_X_pf_X-bar_X} and \eqref{ks_conv_of_max_X-hat_X_pf_bar_X-hat_X}, we find
\begin{align*}
\pr \Big( \Big\{ \max_{1 \leq i \leq N}  \left| X^\varepsilon_{N,i} - \hat{X}_{i} \right|(t) \geq \varepsilon^\eta \Big\} \Big) \leq C \varepsilon^\gamma .
\end{align*}
\end{proof}

Now we prove theorem \ref{poc_strong}. 

\begin{proof}[Proof of theorem \ref{poc_strong}.] 
Let $T>0$,$\alpha \in (0, \frac{1}{2} )$, $\beta \in (0, \frac{\alpha}{2} )$, $\gamma >0$. Moreover let $\varepsilon >0$ and $N \in \N$ such that $\varepsilon \geq N^{-\beta}$ and $t \in [0,T]$. Whenever $i$ appears, it is considered an element of $\lbrace 1, \dots , N \rbrace$. We use again $C$ as a generic constant which does not depend on $\varepsilon ,N,t$ or $i$.

We follow the proof of \cite[Theorem 5]{BOL2026113712} and mention where we need to argue in a different way. We also repeat some steps for the readers convenience. \\

We start with the equation \cite[(93)]{BOL2026113712}:
\begin{align*}
\frac{d}{dt}\mathcal{H}(u_N^\varepsilon|{u^\varepsilon}^{\otimes N})
=&-\frac{1}{N}\sum_{i=1}^N\int_{\mathbb{R}^{2N}}u_N^\varepsilon\Big|\nabla_{x_i}\log \frac{u_N^\varepsilon}{{u^\varepsilon}^{\otimes N}} \Big|^2\,dx_1\cdots dx_N\nonumber\\
&-\frac{1}{N}\sum_{i=1}^N\int_{\mathbb{R}^{2N}}u_N^\varepsilon\exp\Big(-\frac{1}{N}\sum_{j=1}^N\Phi^\varepsilon(x_i-x_j) \Big)
\Big|\nabla_{x_i}\log\frac{u_N^\varepsilon}{{u^\varepsilon}^{\otimes N}} \Big|^2\,dx_1\cdots dx_N
+I_1+I_2,\end{align*}
in which
\begin{align*}
I_1:=&-\frac{1}{N}\sum_{i=1}^N\int_{\mathbb{R}^{2N}}u_N^\varepsilon\exp\Big(-\frac{1}{N}\sum_{j=1}^N\Phi^\varepsilon(x_i-x_j) \Big)\nonumber\\
&\phantom{xxxxxxx}\cdot\Big(\nabla_{x_i}\Phi^\varepsilon\ast u^\varepsilon(t,x_i)-\frac{1}{N}\sum_{j=1}^N\nabla_{x_i}\Phi^\varepsilon(x_i-x_j) \Big)
\cdot\nabla_{x_i}\log \frac{u_N^\varepsilon}{{u^\varepsilon}^{\otimes N}}\,dx_1\cdots dx_N
\end{align*}
and
\begin{align*}
I_2:=&-\frac{1}{N}\sum_{i=1}^N\int_{\mathbb{R}^{2N}}u_N^\varepsilon
\Big(\exp\Big(-\frac{1}{N}\sum_{j=1}^N\Phi^\varepsilon(x_i-x_j) \Big)-\exp(-\Phi^\varepsilon\ast u^\varepsilon(t,x_i)) \Big)\nonumber\\
&\phantom{xxxxxxx}\cdot \big(\nabla_{x_i}\log {u^\varepsilon}^{\otimes N}-\nabla_{x_i}\Phi^\varepsilon\ast u^\varepsilon (t,x_i)\big)
\cdot \nabla_{x_i}\log \frac{u_N^\varepsilon}{{u^\varepsilon}^{\otimes N}}\,dx_1\cdots dx_N.
\end{align*}
We divide the terms $I_1$ and $I_2$ in the same way as in \cite{BOL2026113712}.
\begin{align*}
I_1\leq& \frac{1}{2N}\sum_{i=1}^N\int_{\mathbb{R}^{2N}}u_N^\varepsilon\exp\Big(-\frac{1}{N}\sum_{i=1}^N\Phi^\varepsilon(x_i-x_j) \Big)
\Big|\nabla_{x_i}\log \frac{u_N^\varepsilon}{{u^\varepsilon}^{\otimes N}} \Big|^2\,dx_1\cdots dx_N\nonumber\\
&+C\frac{1}{N}\sum_{i=1}^N\mathbb{E}\big(\big|\nabla \Phi^\varepsilon\ast u^\varepsilon(t,X_{N,i}^{\varepsilon}(t ))
-\nabla \Phi^\varepsilon\ast u^\varepsilon(t,\bar{X}_i^\varepsilon(t  )) \big|^2 \big)\nonumber\\
&+C\frac{1}{N}\sum_{i=1}^N\mathbb{E}\Big(\Big|\nabla \Phi^\varepsilon\ast u^\varepsilon(t,\bar{X}_i^\varepsilon(t  ))
-\frac{1}{N}\sum_{j=1}^N\nabla \Phi^\varepsilon \left( (\bar{X}_i^\varepsilon-\bar{X}_j^\varepsilon)(t  ) \right) \Big|^2 \Big)\nonumber
\\
&+C\frac{1}{N}\sum_{i=1}^N\mathbb{E}\Big(\Big|\frac{1}{N}\sum_{j=1}^N\big|\nabla \Phi^\varepsilon \left( (\bar{X}_i^\varepsilon-\bar{X}_j^\varepsilon)(t  ) \right)
-\nabla \Phi^\varepsilon \left( (X_{N,i}^\varepsilon-X_{N,j}^\varepsilon)(t  ) \right) \big| \Big|^2 \Big)\nonumber\\
=:&\frac{1}{2N}\sum_{i=1}^N\int_{\mathbb{R}^{2N}}u_N^\varepsilon\exp\Big(-\frac{1}{N}\sum_{i=1}^N\Phi^\varepsilon(x_i-x_j) \Big)
\Big|\nabla_{x_i}\log \frac{u_N^\varepsilon}{{u^\varepsilon}^{\otimes N}} \Big|^2\,dx_1\cdots dx_N
+I_{11}+I_{12}+I_{13}.
\end{align*}
Now we come to the part, where the proof differs. We estimate the terms $I_{11}$ and $I_{13}$. \\

Therefore, we introduce the set $ A_N^\varepsilon(s):= \left\lbrace \max\limits_{1 \leq i \leq N} |X_{N,i}^{\varepsilon} - \bar{X}_i^\varepsilon |(s) \geq N^{-\alpha} \right\rbrace $. Since $
\|D^2\Phi^\varepsilon\ast u^\varepsilon\|_{L^\infty((0,T)\times\mathbb{R}^2)} \leq \frac{C}{\varepsilon} $
 (see proof of \cite[Theorem 5]{BOL2026113712}), we get
\begin{align*}
\ept \Big( \mathbbm{1}_{ {A_N^\varepsilon(t)}^c} \big|\nabla \Phi^\varepsilon\ast u^\varepsilon(t,X_{N,i}^{\varepsilon}(t ))
-\nabla \Phi^\varepsilon\ast u^\varepsilon(t,\bar{X}_i^\varepsilon(t  )) \big|^2 \Big) \leq  \ept \Big( \mathbbm{1}_{ {A_N^\varepsilon(t)}^c} \Big( \frac{C}{\varepsilon} \Big)^2 \big|X_{N,i}^{\varepsilon} - \bar{X}_i^\varepsilon|^2(t  ) \Big)  \leq \frac{C}{\varepsilon^2} N^{-2 \alpha }.
\end{align*}
From $\|\nabla \Phi^\varepsilon\ast u^\varepsilon\|_{L^\infty((0,T)\times\mathbb{R}^2)} \leq C$, as given in \eqref{nablaPhistaru} and theorem \ref{ks_conv_of_max_X-bar_X}, we obtain
\begin{align*}
\ept \Big( \mathbbm{1}_{ {A_N^\varepsilon(t)}} \big|\nabla \Phi^\varepsilon\ast u^\varepsilon(t,X_{N,i}^{\varepsilon}(t ))
-\nabla \Phi^\varepsilon\ast u^\varepsilon(t,\bar{X}_i^\varepsilon(t  )) \big|^2 \Big) \leq C \pr \big(A_N^\varepsilon(t)\big) \leq C N^{-\gamma }.
\end{align*}
Putting these estimations together, we find
\begin{align*}
 I_{11} =& C\frac{1}{N}\sum_{i=1}^N \ept \Big( \mathbbm{1}_{ {A_N^\varepsilon(t)}^c} \big|\nabla \Phi^\varepsilon\ast u^\varepsilon(t,X_{N,i}^{\varepsilon}(t ))
-\nabla \Phi^\varepsilon\ast u^\varepsilon(t,\bar{X}_i^\varepsilon(t  )) \big|^2 \Big) \\
& + C\frac{1}{N}\sum_{i=1}^N \ept \Big( \mathbbm{1}_{ {A_N^\varepsilon(t)}} \big|\nabla \Phi^\varepsilon\ast u^\varepsilon(t,X_{N,i}^{\varepsilon}(t  ))-\nabla \Phi^\varepsilon\ast u^\varepsilon(t,\bar{X}_i^\varepsilon(t  )) \big|^2 \Big) 
\leq  \frac{C}{\varepsilon^2} N^{-2 \alpha } + C N^{-\gamma }
\end{align*}
The term $\|D^2\Phi^\varepsilon\|_{L^\infty(\mathbb{R}^2)}$ is critical and needs a more careful treatment. So we just keep it for the time being.
\begin{align*}
& \mathbb{E}\Big(\mathbbm{1}_{ {A_N^\varepsilon(t)}^c} \Big|\frac{1}{N}\sum_{j=1}^N\big|\nabla \Phi^\varepsilon \left( (\bar{X}_i^\varepsilon-\bar{X}_j^\varepsilon )(t  ) \right) - \nabla \Phi^\varepsilon \left( (X_{N,i}^\varepsilon-X_{N,j}^\varepsilon)(t  ) \right)\big| \Big|^2 \Big)  \\
\leq & \mathbb{E}\Big(\mathbbm{1}_{ {A_N^\varepsilon(t)}^c} \Big|\frac{1}{N}\sum_{j=1}^N \|D^2\Phi^\varepsilon\|_{L^\infty(\mathbb{R}^2)} \big| (\bar{X}_i^\varepsilon -X_{N,i}^\varepsilon -(\bar{X}_j^\varepsilon-X_{N,j}^\varepsilon))(t) \big| \Big|^2 \Big) \leq  C\|D^2\Phi^\varepsilon\|_{L^\infty(\mathbb{R}^2)}^2 N^{-2\alpha }.
\end{align*}
Because $ \Vert \nabla \Phi^\varepsilon \Vert_{L^\infty(\mathbb{R}^2)} \leq \|\nabla \Phi \|_{L^1(\mathbb{R}^2)} \| j^\varepsilon\|_{L^{\infty}(\mathbb{R}^2)} \leq \frac{C}{\varepsilon^2}$, we conclude with the help of theorem \ref{ks_conv_of_max_X-bar_X}
\begin{align*}
& \mathbb{E}\bigg(\mathbbm{1}_{ {A_N^\varepsilon(t)}} \Big|\frac{1}{N}\sum_{j=1}^N\big|\nabla \Phi^\varepsilon \left( (\bar{X}_i^\varepsilon-\bar{X}_j^\varepsilon )(t  ) \right) - \nabla \Phi^\varepsilon \left( (X_{N,i}^\varepsilon-X_{N,j}^\varepsilon)(t  ) \right)\big| \Big|^2 \bigg)\leq \frac{C}{\varepsilon^4} \pr ( A_N^\varepsilon(t) ) \leq \frac{C}{\varepsilon^4} N^{-\gamma }.
\end{align*}
So we find for $I_{13}$:
\begin{align*}
I_{13} \leq &  C\|D^2\Phi^\varepsilon\|_{L^\infty(\mathbb{R}^2)}^2 N^{-2\alpha } + \frac{C}{\varepsilon^4} N^{-\gamma }.
\end{align*}
From \cite[(97)]{BOL2026113712} we know $I_{12} \leq \frac{C}{\varepsilon^4}N^{-1}$ and we finally receive
\begin{align*}
I_1 \leq & \frac{1}{2N}\sum_{i=1}^N\int_{\mathbb{R}^{2N}}u_N^\varepsilon\exp\Big(-\frac{1}{N}\sum_{i=1}^N\Phi^\varepsilon(x_i-x_j) \Big)
\Big|\nabla_{x_i}\log \frac{u_N^\varepsilon}{{u^\varepsilon}^{\otimes N}} \Big|^2\,dx_1\cdots dx_N \\
& +\frac{C}{\varepsilon^2} N^{-2 \alpha } + C N^{-\gamma }+\frac{C}{\varepsilon^4}N^{-1}+  C\|D^2\Phi^\varepsilon\|_{L^\infty(\mathbb{R}^2)}^2 N^{-2\alpha } + \frac{C}{\varepsilon^4} N^{-\gamma }.
\end{align*}
Taking into account the assumption $\|\nabla\log u^\varepsilon\|_{L^\infty(0,T;L^\infty(\mathbb{R}^2))} \leq C$, we proceed with $I_2$ as in \cite{BOL2026113712} immediately before (99).
\begin{align*}
I_2\leq& \frac{1}{2N}\sum_{i=1}^N\int_{\mathbb{R}^{2N}}u_N^\varepsilon\Big|\nabla_{x_i}\log \frac{u_N^\varepsilon}{{u^\varepsilon}^{\otimes N}} \Big|^2\,dx_1\cdots dx_N\nonumber\\
&+\frac{C}{N}\sum_{i=1}^N\mathbb{E}\Big(\Big|\frac{1}{N}\sum_{j=1}^N\big|\Phi^\varepsilon \left( (X_{N,i}^\varepsilon-X_{N,j}^\varepsilon)(t  ) \right)-
\Phi^\varepsilon \left( (\bar{X}_i^\varepsilon-\bar{X}_j^\varepsilon)(t  ) \right) \big|\Big|^2 \Big)\nonumber\\
&+\frac{C}{N}\sum_{i=1}^N\mathbb{E}\Big(\Big|\frac{1}{N}\sum_{j=1}^N\big(\Phi^\varepsilon \left( (\bar{X}_i^\varepsilon-\bar{X}_j^\varepsilon)(t  ) \right)
 -\Phi^\varepsilon\ast u^\varepsilon(t,\bar{X}_i^\varepsilon(t  ))\big)\Big|^2 \Big)\nonumber\\
 &+\frac{C}{N}\sum_{i=1}^N\mathbb{E}\big(|\Phi^\varepsilon\ast u^\varepsilon(t,\bar{X}_i^\varepsilon(t  ))-
 \Phi^\varepsilon\ast u^\varepsilon(t,X_{N,i}^\varepsilon(t  )) |^2 \big) \\
=: &  \frac{1}{2N}\sum_{i=1}^N\int_{\mathbb{R}^{2N}}u_N^\varepsilon\Big|\nabla_{x_i}\log \frac{u_N^\varepsilon}{{u^\varepsilon}^{\otimes N}} \Big|^2\,dx_1\cdots dx_N + I_{21} + I_{22} + I_{23}.
\end{align*}
We need to estimate $I_{21}$ and $I_{23}$. 

Since we already know from \eqref{est_D_phi_eps} that $ \Vert \nabla \Phi^\varepsilon \Vert_{L^\infty(\mathbb{R}^2)} \leq \frac{C}{\varepsilon^2}$ , we can conclude
\begin{align*}
& \mathbb{E}\bigg( \mathbbm{1}_{ {A_N^\varepsilon(t)}^c} \Big|\frac{1}{N}\sum_{j=1}^N\big|\Phi^\varepsilon \left( (X_{N,i}^\varepsilon-X_{N,j}^\varepsilon)(t  ) \right)-
\Phi^\varepsilon \left( (\bar{X}_i^\varepsilon-\bar{X}_j^\varepsilon)(t  ) \right) \big|\Big|^2 \bigg) \\
\leq & \frac{C}{\varepsilon^4} \mathbb{E}\Big( \mathbbm{1}_{ {A_N^\varepsilon(t)}^c} \max_{1 \leq i \leq N}  \big| \bar{X}_i^\varepsilon -X_{N,i}^\varepsilon \big|^2(t  ) \Big) \leq \frac{C}{\varepsilon^4} N^{-2\alpha }.
\end{align*}
From $\Vert \Phi^\varepsilon \Vert_{L^\infty( \R^2)} \leq \frac{C}{\varepsilon} $ see \eqref{est_phi_eps}) and theorem \ref{ks_conv_of_max_X-bar_X} we deduce
\begin{align*}
\mathbb{E}\bigg( \mathbbm{1}_{A_N^\varepsilon(t)} \Big|\frac{1}{N}\sum_{j=1}^N\big|\Phi^\varepsilon \left( (X_{N,i}^\varepsilon-X_{N,j}^\varepsilon)(t  ) \right)-
\Phi^\varepsilon \left( (\bar{X}_i^\varepsilon-\bar{X}_j^\varepsilon)(t  ) \right) \big|\Big|^2 \bigg)\leq \frac{C}{\varepsilon^2} \pr ( A_N^\varepsilon(t) ) \leq \frac{C}{\varepsilon^2} N^{- \gamma } .
\end{align*}
So we find
\begin{align*}
 I_{21} \leq & \frac{C}{\varepsilon^4} N^{-2\alpha } + \frac{C}{\varepsilon^2} N^{- \gamma }.
\end{align*}
We use the fact that $\|\nabla \Phi^\varepsilon\ast u^\varepsilon\|_{L^\infty((0,T)\times\mathbb{R}^2)} \leq C$ from \eqref{nablaPhistaru} and obtain
\begin{align*}
\mathbb{E}\Big( \mathbbm{1}_{ {A_N^\varepsilon(t)}^c} \big|\Phi^\varepsilon\ast u^\varepsilon(t,\bar{X}_i^\varepsilon(t  ))-
 \Phi^\varepsilon\ast u^\varepsilon(t,X_{N,i}^\varepsilon(t  )) \big|^2 \Big) \leq \mathbb{E}\Big( \mathbbm{1}_{ {A_N^\varepsilon(t)}^c}  C \big|\bar{X}_i^\varepsilon - X_{N,i}^\varepsilon \big|^2(t  ) \Big) \leq C N^{-2 \alpha}.
\end{align*}
In view of \eqref{prop_Phi}, we have $\Phi \in L^2(\R^2)$. The uniform bound of $u^\varepsilon$ from lemma \ref{bound_u^eps} leads to a uniform bound of for $\Phi^\varepsilon \ast u^\varepsilon$.
$$\|\Phi^\varepsilon \ast u^\varepsilon\|_{L^\infty((0,T)\times \mathbb{R}^2)} \leq \| \Phi^\varepsilon \|_{L^2(\mathbb{R}^2)} \| u^\varepsilon\|_{L^2((0,T)\times\mathbb{R}^2)} \leq C \| \Phi \|_{L^2(\mathbb{R}^2)} \| j^\varepsilon \|_{L^1(\mathbb{R}^2)} = C$$
By applying theorem \ref{ks_conv_of_max_X-bar_X} we get
\begin{align*}
\mathbb{E}\Big( \mathbbm{1}_{ A_N^\varepsilon(t)} |\Phi^\varepsilon\ast u^\varepsilon(t,\bar{X}_i^\varepsilon(t  ))-
 \Phi^\varepsilon\ast u^\varepsilon(t,X_{N,i}^\varepsilon(t  )) |^2 \Big) \leq  C\pr (A_N^\varepsilon(t)) \leq  CN^{-\gamma }.
\end{align*}
This yields
\begin{align*}
I_{23} \leq C N^{-2 \alpha } +  CN^{-\gamma }.
\end{align*}
Essentially, the same argument as in (57) and the previous steps in \cite[Prop 14]{BOL2026113712} shows $I_{22} \leq \frac{C}{\varepsilon^4} N^{-1} $. Therefore, we receive
\begin{align*}
I_2 \leq \frac{1}{2N}\sum_{i=1}^N\int_{\mathbb{R}^{2N}}u_N^\varepsilon\Big|\nabla_{x_i}\log \frac{u_N^\varepsilon}{{u^\varepsilon}^{\otimes N}} \Big|^2\,dx_1\cdots dx_N + \frac{C}{\varepsilon^4} N^{-2\alpha } + \frac{C}{\varepsilon^2} N^{- \gamma } + \frac{C}{\varepsilon^4} N^{-1} + CN^{-2 \alpha } +  CN^{-\gamma } .
\end{align*}
Hence, we obtain the estimate for the relative entropy in the following
\begin{align*}
& \frac{d}{dt}\mathcal{H}(u_N^\varepsilon|{u^\varepsilon}^{\otimes N}) \\
\leq & -\frac{1}{2N}\sum_{i=1}^N\int_{\mathbb{R}^{2N}}u_N^\varepsilon\Big|\nabla_{x_i}\log \frac{u_N^\varepsilon}{{u^\varepsilon}^{\otimes N}} \Big|^2\,dx_1\cdots dx_N\nonumber\\
&\qquad -\frac{1}{2N}\sum_{i=1}^N\int_{\mathbb{R}^{2N}}u_N^\varepsilon\exp\Big(-\frac{1}{N}\sum_{j=1}^N\Phi^\varepsilon(x_i-x_j) \Big)
\Big|\nabla_{x_i}\log\frac{u_N^\varepsilon}{{u^\varepsilon}^{\otimes N}} \Big|^2\,dx_1\cdots dx_N\nonumber\\
&\qquad +\frac{C}{\varepsilon^2} N^{-2 \alpha } + C N^{-\gamma }+\frac{C}{\varepsilon^4}N^{-1}+  C\|D^2\Phi^\varepsilon\|_{L^\infty(\mathbb{R}^2)}^2 N^{-2\alpha } + \frac{C}{\varepsilon^4} N^{-\gamma }\\
&\qquad  + \frac{C}{\varepsilon^4} N^{-2\alpha } + \frac{C}{\varepsilon^2} N^{- \gamma } + \frac{C}{\varepsilon^4} N^{-1} + C  N^{-2 \alpha } +  CN^{-\gamma }.
\end{align*}
\underline{From now on we assume} $\varepsilon \leq 1$. Moreover, we choose $\gamma = 2 \alpha$ and simplify the estimation by considering only the terms with the worst asymptotics.
$$ 
\frac{d}{dt} \mathcal{H}(u_N^\varepsilon|{u^\varepsilon}^{\otimes N}) \leq C\|D^2\Phi^\varepsilon\|_{L^\infty(\mathbb{R}^2)}^2 N^{-2\alpha } + \frac{C}{\varepsilon^4} N^{-\gamma } + \frac{C}{\varepsilon^4} N^{-2\alpha } \leq  C\|D^2\Phi^\varepsilon\|_{L^\infty(\mathbb{R}^2)}^2 N^{-2\alpha } + \frac{C}{\varepsilon^4} N^{-2\alpha }  
$$
In view of the scaling we have $\varepsilon^{2\frac{\alpha}{\beta}} \geq N^{-2 \alpha}$ and hence we find
$$
 \mathcal{H}(u_N^\varepsilon|{u^\varepsilon}^{\otimes N})(t) \leq  C\Big( \|D^2\Phi^\varepsilon\|_{L^\infty(\mathbb{R}^2)}^2 N^{-2\alpha } + \frac{1}{\varepsilon^4} N^{-2\alpha } \Big) T \leq  C\|D^2\Phi^\varepsilon\|_{L^\infty(\mathbb{R}^2)}^2 \varepsilon^{2\frac{\alpha}{\beta}} +  C\varepsilon^{2\frac{\alpha}{\beta}-4} . 
 $$
Now we need to discuss $\|D^2\Phi^\varepsilon\|_{L^\infty(\mathbb{R}^2)}$. Let $a>2$. Then \eqref{est_D^2_phi_eps} implies $\|D^2\Phi^\varepsilon\|_{L^\infty(\mathbb{R}^2)}^2 \varepsilon^{2\frac{\alpha}{\beta}} \leq C \varepsilon^{2(\frac{\alpha}{\beta}-a)}$. Observe that $2(\frac{\alpha}{\beta}-a) < 2\frac{\alpha}{\beta}-4$. We would like to denote the exponent with a single variable in a suitable range. Let $b \in (0, 2\frac{\alpha}{\beta}-4)$. It holds
\begin{align} \label{poc_strong_pf_a_b}
b=2\Big(\frac{\alpha}{\beta}-a \Big) \Leftrightarrow a=\frac{\alpha}{\beta} - \frac{b}{2}.
\end{align}
We can choose $a=\frac{\alpha}{\beta} - \frac{b}{2} >2$. Now \eqref{poc_strong_pf_a_b} implies $\|D^2\Phi^\varepsilon\|_{L^\infty(\mathbb{R}^2)}^2 \varepsilon^{2\frac{\alpha}{\beta}} \leq C \varepsilon^b$. Taking into account that $\varepsilon \leq 1$ we conclude
$$ 
\mathcal{H}(u_N^\varepsilon|{u^\varepsilon}^{\otimes N}) \leq C \varepsilon^b .
$$
Let $l \in \N$. \underline{From now on we assume} $N \geq l$. In the same way as in \cite[(101)]{BOL2026113712} it follows,
$$ \sup_{t\in [0,T]}\|u^\varepsilon_{N,l}(t)-{u^\varepsilon}^{\otimes l}(t)\|_{L^1(\mathbb{R}^{2l})}^2 \leq  C \varepsilon^{b}. $$ 
From the error estimates $(9)$ in \cite[Theorem 2]{BOL2026113712} we infer:
$$  \|{u^\varepsilon}^{\otimes l} - {u}^{\otimes l}\|_{L^\infty(0,T;L^1(\mathbb{R}^{2l}))}^2  \leq  C \varepsilon .$$
Finally we obtain
$$ \|u^\varepsilon_{N,l} - {u}^{\otimes l}\|_{L^\infty(0,T;L^1(\mathbb{R}^{2l}))}^2 \leq  C \max\big\{ \varepsilon^{b} , \varepsilon\big\}. $$ 
\end{proof}

\appendix

\counterwithin{theorem}{section}

\section{}

\begin{lemma}[Law of large numbers] \label{lln}
Let $(\Omega,\mathcal{F},\pr )$ be a probability space and $(Y_i : \Omega \to \R^d)_{i \in \N}$ a sequence of i.i.d random variables such that the distribution of $Y_i$ has probability density $v$. Moreover let $\Psi : \R^d \to \R$ continuous and bounded. We consider the following subsets:
\begin{align*}
& A^{N,i}_\theta (\Psi ,v):= \bigg\{ \Big|\displaystyle\frac{1}{N}\sum_{j=1}^N  \Psi (Y_i - Y_j) - \Psi \ast v (Y_i)  \Big| \geq \frac{1}{N^\theta} \bigg\} \mbox{ and } A^N_\theta (\Psi ,v) := \bigcup^N_{i=1} A^{N,i}_\theta (\Psi,v).
\end{align*}
For all $m \in \N$ and $\theta \in [0,\frac{1}{2})$ there exists some $C>0$ such that for all $N \in \N$
$$ \pr \big( A^N_\theta (\Psi ,v) \big) \leq N \max_{1 \leq i \leq N} \pr \big( A^{N,i}_\theta (\Psi,v) \big) \leq C N^{2m(\theta -\frac{1}{2})+1} \Vert \Psi \Vert_{L^\infty (\R^d)} .$$
\end{lemma}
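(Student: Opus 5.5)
The first inequality is just a union bound, $\pr(A^N_\theta)\le \sum_{i=1}^N \pr(A^{N,i}_\theta)\le N\max_i \pr(A^{N,i}_\theta)$. For the second, fix $i$ and apply Markov's inequality with the even power $2m$:
\[
\pr\big(A^{N,i}_\theta\big)\le N^{2m\theta}\,\ept\Big|\frac1N\sum_{j=1}^N Z_{ij}\Big|^{2m},\qquad Z_{ij}:=\Psi(Y_i-Y_j)-\Psi\ast v(Y_i).
\]
The task then reduces to showing $\ept|\frac1N\sum_j Z_{ij}|^{2m}\le C N^{-m}$, with a constant controlled by $\Vert\Psi\Vert_{L^\infty}$. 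Multiplying by $N^{2m\theta}$ gives $N^{2m(\theta-\frac12)}$, and the union bound contributes the extra factor $N$.

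For the moment bound I would condition on $Y_i$. For $j\neq i$, independence gives $\ept(Z_{ij}\mid Y_i)=\int\Psi(Y_i-y)v(y)\,dy-\Psi\ast v(Y_i)=0$, and conditionally on $Y_i$ the variables $(Z_{ij})_{j\neq i}$ are i.i.d. The diagonal term $Z_{ii}=\Psi(0)-\Psi\ast v(Y_i)$ is not centered, but it satisfies $|Z_{ii}|\le 2\Vert\Psi\Vert_\infty$. Using convexity, I would split off $\frac1N Z_{ii}$; it contributes at most $C N^{-2m}\Vert\Psi\Vert_\infty^{2m}\le CN^{-m}\Vert\Psi\Vert_\infty^{2m}$. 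For the remaining sum, expand $\ept\big(|\sum_{j\neq i}Z_{ij}|^{2m}\mid Y_i\big)$ as a sum over tuples $(j_1,\dots,j_{2m})$. Any tuple in which some index appears exactly once has vanishing conditional expectation, because that single factor can be integrated out first. The surviving tuples have every index repeated, so at most $m$ distinct indices occur, giving $O(N^m)$ such tuples, each bounded by $(2\Vert\Psi\Vert_\infty)^{2m}$. Alternatively, one can invoke Marcinkiewicz--Zygmund/Rosenthal conditionally. Either route yields $\ept|\frac1N\sum_j Z_{ij}|^{2m}\le C(m)N^{-m}\Vert\Psi\Vert_\infty^{2m}$.

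The combinatorial counting is the only substantive step, and it is standard. The main point to be careful about is the power of $\Vert\Psi\Vert_\infty$: the argument naturally produces $\Vert\Psi\Vert_\infty^{2m}$, not $\Vert\Psi\Vert_\infty$ as printed. This matches how the lemma is used in the proof of Theorem~\ref{conv_of_mean}, where the factors $\Vert\nabla\Phi^\varepsilon\Vert^{2k_1}_{L^\infty}$ and $\Vert\Phi^\varepsilon\Vert^{2k_3}_{L^\infty}$ appear. So I would prove the bound with $\Vert\Psi\Vert_{L^\infty}^{2m}$, keeping $C$ depending only on $m$ and $\theta$ and uniform in $\Psi$, $v$ and $N$. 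Continuity of $\Psi$ is needed only for measurability and for $\Psi\ast v$ to be well defined pointwise.
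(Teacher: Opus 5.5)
Your proof is correct and is the standard argument the paper relies on (it only says the proof is similar to \cite[Lemma~7]{chen2024fluctuationsmeanfieldlimitattractive}): union bound, Markov's inequality with the $2m$-th moment, conditional centering of $Z_{ij}$ for $j\neq i$ with the bounded diagonal term split off, and the count of $O(N^m)$ index tuples with no singleton index. You are also right that the estimate should carry $\Vert\Psi\Vert_{L^\infty(\R^d)}^{2m}$ with $C=C(m)$ independent of $\Psi$, $v$ and $N$; this is exactly the form used in the proof of Theorem~\ref{conv_of_mean} (e.g.\ the factors $\Vert\nabla\Phi^\varepsilon\Vert_{L^\infty(\R^2)}^{2k_1}$ and $\Vert\Phi^\varepsilon\Vert_{L^\infty(\R^2)}^{2k_3}$), so the single power of the norm in the printed statement should be read as the $2m$-th power.
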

The proof is similar to \cite[Lemma 7]{chen2024fluctuationsmeanfieldlimitattractive}.

\begin{lemma}[Uniform bound of $u^\varepsilon$] \label{bound_u^eps} Let $T>0$. The solution $u^\varepsilon$ of \eqref{kellersegelmedium} satisfies
\begin{align*}
\forall p \in [2, \infty ) : \exists C > 0 : \forall \varepsilon >0 : \Vert u^\varepsilon \Vert_{L^\infty (0,T;L^p(\R^d))} \leq C.
\end{align*}
\end{lemma}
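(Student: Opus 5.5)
The plan is to run the a priori estimates behind \cite[Theorem 2]{BOL2026113712} for the regularized system \eqref{kellersegelmedium}, keeping track of every constant so that $\varepsilon$ enters only through $\Vert j^\varepsilon\Vert_{L^1(\R^2)}=1$. The argument has two stages: first a uniform $L^\infty(0,T;L^2)$ bound, then a bootstrap to every $p\in[2,\infty)$ by $L^p$ energy estimates. Throughout I use three facts. Mass is conserved, $\Vert u^\varepsilon(t)\Vert_{L^1}=1$. The signal $v^\varepsilon=\Phi\ast j^\varepsilon\ast u^\varepsilon$ is nonnegative, so $e^{-v^\varepsilon}\le 1$. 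And $\Delta v^\varepsilon = v^\varepsilon-\chi\, u^\varepsilon\ast j^\varepsilon$.

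\emph{Step 1 (uniform $L^2$).} Testing the first equation of \eqref{kellersegelmedium} with $2u^\varepsilon$ and writing $\Delta(e^{-v}u+u)=\nabla\cdot\big((1+e^{-v})\nabla u-e^{-v}u\nabla v\big)$ gives
\begin{align*}
\frac{d}{dt}\int_{\R^2} u^2 \le -2\int_{\R^2}(1+e^{-v})|\nabla u|^2+\int_{\R^2} e^{-v}\nabla(u^2)\cdot\nabla v .
\end{align*}
Integrating the last term by parts produces $\int u^2 e^{-v}(\chi u\ast j^\varepsilon - v)$ plus a term with $|\nabla v|^2$. The first piece is bounded by $\chi\Vert u\Vert_{L^3}^3$, since Young's inequality with $\Vert j^\varepsilon\Vert_{L^1}=1$ handles the convolution. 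I would then redo the free-energy/Sobolev argument of \cite{BOL2026113712}, which is where Assumption \ref{ass_ks} c), $\chi<4/c_*$, is used: it makes the aggregation term $\chi\int u^2\,(u\ast j^\varepsilon)$ absorbable by the dissipation through the inequality $\Vert w\Vert_4^4\le c_*\Vert w\Vert_2^2\Vert\nabla w\Vert_2^2$. The entropy bound $u_0\log u_0\in L^1$ and the second moment control the remaining lower-order terms. Since only $\Vert j^\varepsilon\Vert_{L^1}$ appears, the resulting bound is independent of $\varepsilon$.

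\emph{Step 2 (bootstrap to $L^p$).} With $\Vert u^\varepsilon\Vert_{L^\infty(0,T;L^2)}\le C$ in hand, property \eqref{prop_Phi} ($\nabla\Phi\in L^q$ for $q<2$) and Young's inequality give $\Vert\nabla v^\varepsilon\Vert_{L^\infty(0,T;L^r)}\le C_r$ for every $r<\infty$, uniformly in $\varepsilon$. Testing with $p\,u^{p-1}$ and splitting the cross term by Cauchy--Schwarz yields
\begin{align*}
\frac{d}{dt}\int u^p + \frac{2(p-1)}{p}\int |\nabla u^{p/2}|^2 \le C_p\int u^p|\nabla v|^2 \le C_p\Vert u^{p/2}\Vert_{L^{2s}}^2\Vert\nabla v\Vert_{L^{2s'}}^2 ,
\end{align*}
where $s>1$ is close to $1$. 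The Gagliardo--Nirenberg inequality in $\R^2$ for $w=u^{p/2}$, interpolating between $\Vert\nabla w\Vert_2$ and $\Vert w\Vert_{L^{2/p}}=\Vert u\Vert_{L^1}^{p/2}$, lets me absorb the right-hand side into the dissipation up to a constant $C_p$. Integrating in time and using $u_0\in L^p$ then gives the claim for every $p\in[2,\infty)$. Global existence of the smooth solution justifies all these computations.

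The main obstacle is Step 1. The $L^2$ estimate is critical in two dimensions, and it closes only through the smallness $\chi<4/c_*$. I would first check carefully that the mollification does not break this sharp constant: the term $\int u^2(u\ast j^\varepsilon)$ must be bounded by $\Vert u\Vert_{L^3}^3$, or in the form $\Vert u\Vert_4^2\Vert u\ast j^\varepsilon\Vert_2$, without any loss of constant. If that fails, I would fall back on the $L\log L$/free-energy functional from \cite{BOL2026113712} and establish uniformity in $\varepsilon$ there.
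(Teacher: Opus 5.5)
\textbf{There is a genuine gap in Step 1.} Your Step 2 is fine once a uniform $L^\infty(0,T;L^2)$ bound is available. Step 1, however, does not close as written, and it carries the whole difficulty; the paper itself simply reads the bound off \cite[Proposition 11]{BOL2026113712}. Two things go wrong.

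First, the integration by parts produces $+\int u^2e^{-v}|\nabla v|^2$ with a \emph{positive} sign, and this is not a lower-order term. Since $|\nabla\tilde\Phi(x)|\lesssim|x|^{-1}$, HLS gives $\|\nabla v^\varepsilon\|_{L^4}\le C\|u\|_{L^{4/3}}$. At small scales the term is therefore critical: it scales like $\|\nabla u\|_{L^2}^2$ under $u\mapsto\lambda^2u(\lambda\,\cdot)$. Controlling it needs information on $u^\varepsilon(t)$ along the flow, which $u_0\log u_0\in L^1$ and the second moment do not give.

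Second, the inequality $\|w\|_4^4\le c_*\|w\|_2^2\|\nabla w\|_2^2$ with $w=u$ does not make $\chi\int u^2(u\ast j^\varepsilon)$ absorbable. It only gives $\int u^2(u\ast j^\varepsilon)\le\|u\|_4^2\|u\|_2\le c_*^{1/2}\|u\|_2^2\|\nabla u\|_2$. After Young you are left with $\frac{d}{dt}\|u\|_2^2\le C\|u\|_2^4$, a Riccati inequality that bounds $\|u\|_2$ only up to a time of order $(C\|u_0\|_2^2)^{-1}$, not on an arbitrary $[0,T]$. The mollification is not the problem, since $\|j^\varepsilon\|_{L^1}=1$ costs nothing. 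The problem is that $\chi<4/c_*$ is not an $L^2$-level condition at all.

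\textbf{The missing step is the free-energy estimate.} You list it only as a fallback, but it has to come first. Write the equation as $\partial_t u=\Delta u+\nabla\cdot\big(e^{-v}u\nabla(\log u-v)\big)$ and set $\mathcal F^\varepsilon=\int u\log u-\frac12\int uv^\varepsilon$, with $j$ even. Then
\[
\frac{d}{dt}\mathcal F^\varepsilon+\int\frac{|\nabla u|^2}{u}+\int e^{-v}u\,|\nabla(\log u-v)|^2=\int\nabla u\cdot\nabla v\le\chi\int u\,(u\ast j^\varepsilon)\le\chi\|u\|_2^2\le\frac{\chi c_*}{4}\int\frac{|\nabla u|^2}{u},
\]
where the last step is the $c_*$-inequality applied to $w=\sqrt u$, for which $\|w\|_2^2=1$. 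This is exactly where $\chi<4/c_*$ is used, and again only $\|j^\varepsilon\|_{L^1}=1$ enters. Add a uniform lower bound on $\mathcal F^\varepsilon$, for example from the logarithmic HLS inequality together with $\frac{d}{dt}\int|x|^2u^\varepsilon\le 8$. This yields $\int_0^T\|u^\varepsilon\|_2^2\,dt\le C$, independently of $\varepsilon$.

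Only then does the $L^2$ estimate close. Split the cross term $2\int e^{-v}u\nabla u\cdot\nabla v$ by Cauchy--Schwarz; no integration by parts is needed. Then bound $\int u^2|\nabla v|^2\le\|u\|_4^2\|\nabla v\|_4^2\le C\|u\|_2^2\|\nabla u\|_2$. This gives $\frac{d}{dt}\|u\|_2^2+\|\nabla u\|_2^2\le C\|u\|_2^2\cdot\|u\|_2^2$. Gronwall with the integrated bound then gives $\sup_{t\le T}\|u^\varepsilon(t)\|_2\le C$, after which your Step 2 applies.
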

This is a simple consequence of \cite[Proposition 11]{BOL2026113712}.

\bibliographystyle{abbrvdin}

\bibliography{lit}

\end{document}